\theoremstyle{plain} \numberwithin{equation}{section}
\newtheorem{Theorem}{Theorem}
\newtheorem{Lemma}[Theorem]{Lemma}
\newtheorem{Proposition}[Theorem]{Proposition}
\newtheorem{Corollary}[Theorem]{Corollary}
\newtheorem{Remark}[Theorem]{Remark}
\newtheorem{Criterion}[Theorem]{Criterion}
\newtheorem{Example}[Theorem]{Example}
\title[Criteria for existence of Riesz bases]
{Criteria for existence of Riesz bases consisting of root functions
of Hill and 1D Dirac operators}
\author[P. Djakov]{Plamen Djakov}
\author[B. Mityagin]{Boris Mityagin}
\thanks{B. Mityagin acknowledges the support of the Scientific and
Technological Research Council of Turkey and the hospitality of
Sabanci University, April--June, 2011.}
\begin{document}

\begin{abstract}
We study the system of root functions (SRF) of Hill operator $Ly =
-y^{\prime  \prime} +vy $ with a singular potential $v \in
H^{-1}_{per}$ and  SRF of 1D Dirac operator $  Ly = i
\begin{pmatrix} 1 & 0 \\ 0 & -1
\end{pmatrix}
\frac{dy}{dx}  + vy $ with matrix $L^2$-potential $v=\begin{pmatrix} 0 & P \\
Q & 0
\end{pmatrix},$
subject to periodic or anti-periodic boundary conditions. Series of
necessary and sufficient conditions (in terms of Fourier
coefficients of the potentials and related spectral gaps and
deviations) for SRF to contain a Riesz basis are proven.
Equiconvergence theorems are used to explain basis property of SRF
in $L^p$-spaces and other rearrangement invariant function spaces.
\vspace{1mm}\\
{\it Keywords}: Hill operators, singular potentials, Dirac
operators, spectral decompositions, Riesz bases, equiconvergence \vspace{1mm} \\
{\em 2010 Mathematics Subject Classification:} 47E05, 34L40.
\end{abstract}

\address{Sabanci University, Orhanli,
34956 Tuzla, Istanbul, Turkey}

 \email{djakov@sabanciuniv.edu}

\address{Department of Mathematics,
The Ohio State University,
 231 West 18th Ave,
Columbus, OH 43210, USA} \email{mityagin.1@osu.edu}

\maketitle

\section*{Content}
\begin{enumerate}

\item[Section 1.]  Introduction \vspace{3mm}

\item[Section 2.] Localization of spectra and Riesz
projections for Hill and Dirac operators   \vspace{3mm}

\item[Section 3.] Elementary geometry of bases in a Banach space
\vspace{3mm}

\item[Section 4.] Moving from geometric criterion to Hill and Dirac operators
\vspace{3mm}

\item[Section 5.] $L^p$-spaces and other rearrangement invariant function
spaces \vspace{3mm}

\item[Section 6.]  Constructive criteria in terms of Fourier
coefficients of potentials \vspace{3mm}

\item[Section 7.] Fundamental inequalities and criteria for Riesz
basis property

\end{enumerate}
\bigskip

\section{Introduction}

  1. In the case of ordinary differential operators with
  {\em strictly regular} boundary conditions ($bc$)
  on a finite interval the system
$\{u_k\}$ of eigen- and associated functions could contain only
finitely many linearly independent associated functions. The
well-defined decompositions
\begin{equation}
\label{1} \sum_k c_k (f) u_k = f \quad  \forall f \in L^2([0, \pi]),
\end{equation}
do converge; moreover, convergence is unconditional, i. e., $\{u_k
\}, \, \|u_k \|=1,$ is a Riesz basis in $L^2([0, \pi]).$
  These facts and phenomena have been well understood in the early 1960's
after the works of N. Dunford \cite{Du58,DS71}, V. P. Mikhailov
\cite{Mi62} and G. M. Keselman \cite{Ke64}.

Maybe the simplest case of {\em regular but not strictly regular
bc} comes if we consider a Hill operator $L_{bc}.$
\begin{equation}
\label{2}  Ly = - y^{\prime  \prime } + v(x) y, \quad 0 \leq x \leq
\pi,
\end{equation}
where $v(x) = v(x+\pi)$ is a complex-valued smooth function, and
$bc$ is periodic ($bc= Per+$) or anti-periodic  ($bc = Per-$), i.
e.,

(a) {\em periodic} $Per^+: \quad y(0) = y (\pi), \;\; y^\prime (0) =
y^\prime (\pi) $;

(b) {\em anti-periodic}  $Per^-: \quad  y(0) = - y (\pi), \;\;
y^\prime (0) = - y^\prime (\pi) $;

(Later we will consider non-smooth $v$ as well, say $v \in L^2$ or $
L^1,$ and $v \in H^{- 1/2}$ or $v \in H^{-1}_{per}$, -- see in
particular Section 4.1.)

  Recently, i.e., in the 2000's, many authors
  \cite{GT09, KeMa98,Ku06,Ma10,MaMe08,MaMe10,Ma06-1,Ma06-2,Ma06-3,Ma-ar,ShVe09} focused
on the problem of convergence of eigenfunction (or more generally
root function) decompositions in the case of regular but not
strictly regular $bc.$

 The free operators $L_{bc}^0=
d^2/dx^2,$ with $bc = Per^\pm$  have infinitely many double
eigenvalues $\lambda^0_n =n^2, \;$ (with $n$ even for $bc=Per^+$ and
$n$ odd for if $bc=Per^-$), the corresponding two-dimensional
eigenspaces $E_n^0 $ are mutually orthogonal and we have the
spectral decomposition of the space
 $$L^2 ([0,\pi]) = \oplus E_n^0    \quad \text{or} \quad f=
 \sum_n P_n^0 f \quad \forall f \in L^2 ([0,\pi]), $$
where $P_n^0$ is the orthogonal projection on $E_n^0.$ The operator
$L_{bc} (v) = L_{bc}^0 +v $  is a "perturbation" of the free
operator; its spectrum is discrete and for large enough $n, $ say
$n>N,$ close to $\lambda_n^0 =n^2$ there are exactly two eigenvalues
$\lambda_n^-, \lambda_n^+$ (counted with multiplicity). Moreover, if
$E_n$ is the corresponding two-dimensional invariant subspace and
$P_n = \frac{1}{2 \pi i} \int_{C_n} (z-\L_{bc})^{-1} dz $ is the
corresponding Cauchy projection, then we have the spectral
decomposition
\begin{equation}
\label{5} S_N f + \sum_{k>N} P_k f = f \quad  \forall f \in L^2 ([0,
\pi]),
\end{equation}
where $S_N $ is the (finite-dimensional) projection on the invariant
subspace corresponding to "small" eigenvalues of $L_{bc} (v),$ and
the series in (\ref{5}) converges unconditionally.

However, even if all eigenvalues $\lambda_n^-, \lambda_n^+, \; n>N $
are simple, there is a question  whether we could use the
corresponding eigenfunctions to give an expansion like (\ref{1}).
The same questions for $Per^\pm$ in the case of 1D periodic Dirac
operators could be asked. Interesting conditions on potentials $v$
(or on its Fourier coefficients), which guarantee basisness of
$\{u_k\}$, -- with or without additional assumptions about the
structure or smoothness of a potential $v$ -- have been given by A.
Makin \cite{Ma06-1,Ma06-2,Ma06-3,Ma-ar}, A. Shkalikov \cite{ShVe09},
O. Veliev \cite{Ve06,Ve10,VeDu02, DV05}, P. Djakov and B. Mityagin
\cite{DM15,DM20,DM25a,DM25,DM26}.
\bigskip

 2. In our papers \cite{KaMi01, DM5, DM15, DM21, DM26} we analyzed the relationship
between smoothness of a potential $v$ in (\ref{2}) and the rate of
"decay" of sequences of
\begin{equation}
\label{6} \text{{\em spectral gaps}}  \quad \gamma_n = \lambda^+_n
-\lambda^-_n
\end{equation}
and
\begin{equation}
\label{7} \text{{\em deviations}}  \quad \delta_n = \mu_n
-\frac{1}{2}( \lambda^+_n + \lambda^-_n).
\end{equation}
This analysis is based on the Lyapunov-Schmidt projection method:
by projecting on the $n$-th eigenvalue space $E_n^0$ of the free
operator $L^0 $ the eigenvalue equation $Ly =\lambda y $ is
reduced locally, for $\lambda = n^2 + z $ with $|z|<n/2 $ to an
eigenvalue equation for a $2 \times 2 $ matrix $\left [
\begin{array}{cc} \alpha_n (v,z)  & \beta^-_n (v;z)
\\ \beta^+_n (v;z) &  \alpha_n (v,z) \end{array}
\right ].$ The entries of this matrix are functionals (depending
analytically on $v$ and $z$),  which are given by explicit formulas
in terms of the Fourier coefficients of the potential $v$ (see
(\ref{22.34}) and (\ref{22.35}) below). They played a crucial role
in proving estimates for and inequalities between $\gamma_n, \,
\delta_n,\, \beta_n^\pm$ and
\begin{equation}
\label{9} t_n (z):=  |\beta^-_n (v,z)|/ |\beta^+_n (v,z)|
\end{equation}
-- see \cite{DM15}, Lemma 49 and Proposition 66.

Moreover, it turns out that there is an essential relation between
the Riesz basis property of the system of root functions and the
ratio functionals $t_n (v,z) $ which made possible to give
criteria for existence  of (Riesz) bases consisting of root
functions not only for Hill operators but for Dirac operators as
well (see, for example, \cite[Theorem 1]{DM25} or \cite[Theorem
2]{DM25a} for Hill, or \cite[Theorem 12]{DM26}  for Dirac
operators). These criteria are quite general and applicable to
wide classes of potentials. For example, we proved that if
\begin{equation}
\label{10} v(x) = 5e^{-4ix} +2 e^{2ix} -3e^{2ix} +4e^{4ix},
\end{equation}
then neither for $bc = Per^+$ nor for $bc =Per^-$ the root function
system of $L_{bc}$ contains a basis in $L^2 ([0, \pi])$. To apply
our criterion we had to overcome a few analytic difficulties. This
was done on the basis of our results and techniques from
\cite{DM10}.

In this paper we extend and slightly generalize these criteria. We
claim, both for Hill operators with singular
$H^{-1}_{per}$-potentials and Dirac operators with $L^2$-potentials
the following.

{\em Criterion. The root system of functions of the  operator
$L_{Per^\pm} (v) $ has the Riesz basis property (i.e., contains a
Riesz basis) if and only}
\begin{equation}
\label{20} \exists C>0: \quad  1/C \leq   t_n (z_n^*)  \leq C  \quad
\text{if}  \quad \lambda_n^- \neq   \lambda_n^-,  \quad n\in
\Gamma_{bc}, \;  |n| >N_*.
\end{equation}
(See the definition of $\Gamma_{bc}$ in Section 2, Formulas
(\ref{22.12}) and (\ref{22.112}).)
\bigskip

  3. Recently  F. Gesztesy and V. Tkachenko \cite[Theorem 1.2]{GT11}
 gave --  {\em in the case of Hill operators with
$L^2$ potentials} - a criterion of basisness in the following form:

  {\em The system of root vectors for $bc = Per^+$ or $bc = Per^-$, contains
a Riesz basis if and only if}
\begin{equation}
\label{11} R_{bc} = \sup \left \{\frac{|\mu_n -
\lambda_n^+|}{|\lambda_n^+ -\lambda_n^-|}:\; n \in \Gamma_{bc}, \;
\lambda_n^+ \neq \lambda_n^- \right \} <\infty.
\end{equation}
One can prove, by using the  estimates  of $|\lambda_n^+
-\lambda_n^-|$ and  $ |\mu_n - \lambda_n^+|$ in terms of $|\beta^-_n
(v,z)|$ and $ |\beta^+_n (v,z)|$ (see \cite[Theorem 66, Lemma
49]{DM15} and \cite[Theorem 37, Lemma 21]{DM21}) that the conditions
(\ref{20}) and (\ref{11}) are equivalent.

However, we directly show (see Theorem \ref{thmM} in Section 7),
using the fundamental inequalities proven in \cite{KaMi01, DM5,
DM15, DM21}, that (\ref{11}) gives necessary and sufficient
conditions of Riesz basisness of root system with $bc = Per^+$ or
$bc = Per^-$ both

(A) {\em in the case of 1D periodic Dirac operators with $L^2$
potential,}

and

(B) {\em in the case of  Hill operators with potential in}
$H^{-1}_{per}$.
\bigskip

4. Criterion for $L^p$-spaces,  $1 < p < \infty, $ given in
\cite[Theorem 1.4]{GT11} can be essentially improved and extended as
well. We take any separable rearrangement invariant function space
$E$ on $[0, \pi]$ (see \cite{KPS82,LT-2}) squeezed between $L^a$ and
$L^b, \, 1 < a \leq b < \infty.$ {\em If
\begin{equation}
\label{13}  1/a - 1/b < 1/2
\end{equation}
in the above cases (A) and (B) the root function system contains a
basis in $E$ if and only (\ref{11})  holds}. In the case of Hill
operators with $v \in H^{-1/2}$ the hypothesis (\ref{13}) could be
weakened to
\begin{equation}
\label{14}  1/a - 1/b < 1.
\end{equation}

Of course for $L^p, \, 1 < p < \infty$, we can put $a = b = p,$ so
(\ref{13}) and (\ref{14}) hold.

The structure of this paper and the topics discussed in different
sections are shown in Content, see p. 1.
\bigskip

\section{Localization of spectra and Riesz
projections for Hill and Dirac operators}

For basic facts of Spectral Theory of ordinary differential
operators we refer to the books \cite{LS91,Na69,Ma86}. But let us
introduce some notations and remind a few properties of Hill and
Dirac operators on a finite interval.
\bigskip

1. We consider the Hill operator
\begin{equation}
\label{22.1} Ly = -y^{\prime \prime} +v(x) y, \quad x \in I= [0,\pi],
\end{equation}
with a (complex-valued) potential $v \in L^2 (I),$ or more generally
with a singular potential $v \in H^{-1}_{per}$ of the form
\begin{equation}
\label{22.2} v  = w^\prime, \quad w \in L_{loc}^2 (\mathbb{R}),
\;\;w(x+\pi) = w(x).
\end{equation}
For $v \in L^2, $ we consider the following  {\em bc} (boundary
conditions):

(a) {\em periodic} $Per^+: \quad y(0) = y (\pi), \;\; y^\prime (0) =
y^\prime (\pi) $;

(b) {\em anti--periodic}  $Per^-: \quad  y(0) = - y (\pi), \;\;
y^\prime (0) = - y^\prime (\pi) $;

(c) {\em Dirichlet} $ Dir: \quad y(0) = 0, \;\; y(\pi) = 0. $

For each $bc=Per^\pm, \, Dir $ the operator $L$ generates a closed
operator $L_{bc}$ with
\begin{equation}
\label{22.3} Dom(L_{bc}) = \{f \in W_2^2 (I)  \; : \; \; f  \;\;
\text{satisfies} \;\; bc\}.
\end{equation}

In the case of singular potentials (\ref{22.2})  A. Savchuk and A.
Shkalikov \cite{SS00,SS03} suggested to use the quasi-derivative
$$
y^{[1]} = y^\prime  - w \,y
$$
in order to define properly the boundary conditions and
corresponding operators. In particular, the periodic and
anti--periodic boundary conditions $Per^\pm $ have the form

($a^*$) $\quad   Per^+: \quad y(\pi)= y(0), \quad y^{[1]}(\pi)=
y^{[1]}(0), $

($b^*$)  $\quad  Per^-: \quad y(\pi)= -y(0), \quad y^{[1]} (\pi)= -
y^{[1]} (0). $

The Dirichlet boundary condition has the same form (c) as in the
classical case. Of course, in the case where $w$ is a continuous
function, $Per^+ $ and $Per^-$  coincide, respectively, with the
classical periodic boundary condition ($a$) and ($b$).

We refer the reader to our papers \cite{DM17, DM16, DM21} for
definitions of the operators $L_{bc}$ and their domains in the case
of $H^{-1}_{per}$-potentials. (We followed \cite{SS00,SS03} and
further development of A. Savchuk -- A. Shkalikov's approach by R.
Hryniv and Ya. Mykytyuk \cite{HM01,HM04,HM06} to justify Fourier
method in analysis of Hill-Schr\"odinger operators with singular
potentials.

If $v=0$ we denote by $L^0_{bc}$ the corresponding free operator. Of
course, it is easy to describe the spectra and eigenfunctions for
$L^0_{bc}.$ Namely, we have

(i) $ Sp (L^0_{Per^+}) = \{n^2, \; n = 0,2,4, \ldots \};$ its
eigenspaces are  $E^0_n = Span \{e^{\pm inx} \} $ for $n>0 $ and
$E^0_0 = \{ const\}, \; \; \dim E^0_n = 2 $ for $n>0, $ and $\dim
E^0_0 = 1. $

(ii) $ Sp (L^0_{Per^-}) = \{n^2, \; n = 1,3,5, \ldots \};$ its
eigenspaces are  $E^0_n = Span \{e^{\pm inx} \}, $ and $ \dim E^0_n =
2. $

(iii) $ Sp (L^0_{Dir}) = \{n^2, \; n \in \mathbb{N} \};$ each
eigenvalue $n^2 $ is simple; the corresponding normalized
eigenfunction is
\begin{equation}
\label{22.4} s_n (x) = \sqrt{2} \sin nx,
\end{equation}
so the corresponding eigenspace is
\begin{equation}
\label{22.5} G_n^0 = Span \{ s_n \}.
\end{equation}
\bigskip

2. Localization of spectra in the case of Hill operators.

\begin{Proposition}
\label{prop2.1} (localization of spectra) Consider $L_{bc} (v)$ with
$bc=Per^\pm, \, Dir$ and with potential $v\in L^2 $ or $v\in
(\ref{22.2}).$ Then, for large enough $N_* = N_* (v)\in 2\mathbb{N},
$ we have
\begin{equation}
\label{22.8} Sp \, (L_{bc}) \subset \Pi_{N_*} \cup \bigcup_{n>N_*, \,
n\in \Gamma_{bc}} D(n^2, r_n),
\end{equation}
where
\begin{equation}
\label{22.9}\Pi_{N}= \{z=x+iy\in \mathbb{C}: \; |x|,\,|y| < N^2 +
\frac{1}{2}N,
\end{equation}
\begin{equation}
\label{22.10} D(a,r) = \{z\in \mathbb{C}: \; |z-a|< r\},
\end{equation}
with
\begin{equation}
\label{22.11} r_n = N_*/2 \quad \text{if} \;\; v \in L^2, \qquad
r_n=n/4 \quad \text{if} \;\; v \in H^{-1}_{per},
\end{equation}
and
\begin{equation}
\label{22.12} \Gamma_{bc}= \begin{cases} \{ 0\}\cup 2\mathbb{N} & bc=
Per^+,\\  2\mathbb{N}-1 & bc= Per^-,\\
\mathbb{N} & bc=Dir.
\end{cases}
\end{equation}
With the resolvent $R(z) = (z-L_{bc})^{-1}$ well defined in the
complement of $Sp \, (L_{bc}),$ we set
\begin{equation}
\label{22.13} S_{N_*} = \frac{1}{2\pi i}\int_{\partial \Pi_{N_*}}
(z-L_{bc})^{-1} dz,
\end{equation}
\begin{equation}
\label{22.14} P_n = \frac{1}{2\pi i}\int_{|z-n^2|=r_n}
(z-L_{bc})^{-1} dz, \quad n>N_*, \; n \in \Gamma_{bc},
\end{equation}
and
\begin{equation}
\label{22.15} S_N = S_{N_*} + \sum_{\tiny \begin{array}{c} n= N_*+1\\
n\in \Gamma_{bc}
\end{array}
}^N P_n.
\end{equation}
Then
\begin{equation}
\label{22.16} \dim P_n = \begin{cases}  2 & n \;\text{even}, \; bc=
Per^+, \\
2 & n \;\text{odd}, \; bc= Per^-, \\
1 & n \in \mathbb{N}, \; bc = Dir,
\end{cases}
\end{equation}
and
\begin{equation}
\label{22.17} \dim S_{N_*} = \begin{cases} N_* +1 & bc = Per^+,\\
N_* &   bc = Per^- \;\; \text{or} \; Dir.
\end{cases}
\end{equation}
In each case the series
\begin{equation}
\label{22.18} S_{N_*} f +\sum_{n>N_*, \, n \in \Gamma_{bc}} P_n f = f
\quad \forall f \in L^2 (I)
\end{equation}
converges unconditionally, so the system of projections is a Riesz
system.
\end{Proposition}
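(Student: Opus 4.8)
The plan is to compare the perturbed operator $L_{bc}=L^0_{bc}+v$ with the free operator $L^0_{bc}$, whose spectrum and eigenprojections are given explicitly in (i)--(iii). Writing $z-L_{bc}=(z-L^0_{bc})\bigl(I-(z-L^0_{bc})^{-1}V\bigr)$, where $V$ denotes the action of the potential, I would first estimate the free resolvent on the contours $\partial\Pi_{N_*}$ and $|z-n^2|=r_n$ through the distance from $z$ to the free spectrum $\{m^2\}$, which along these contours is bounded below by a fixed multiple of $r_n$ on the circles and of $N_*$ on $\partial\Pi_{N_*}$. Then, by the standard Fourier-method estimates, $\|(z-L^0_{bc})^{-1}V\|<1$ uniformly on these contours once $N_*$ is large, so the Neumann series for $\bigl(I-(z-L^0_{bc})^{-1}V\bigr)^{-1}$ converges and $R(z)=(z-L_{bc})^{-1}$ exists there; this yields the localization (\ref{22.8}) and makes the Riesz projections (\ref{22.13})--(\ref{22.14}) well defined. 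For $v\in H^{-1}_{per}$ the same scheme is carried out in the quasi-derivative framework of Savchuk--Shkalikov and Hryniv--Mykytyuk, where $V$ is more singular; this is precisely why the radius must be enlarged to $r_n=n/4$ to preserve the bound.

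For the dimension formulas (\ref{22.16}) and (\ref{22.17}) I would run a homotopy argument along $tv$, $t\in[0,1]$. Since $\|tv\|\le\|v\|$, a single large $N_*$ keeps all the resolvent bounds uniform over $t$, so $P_n(t)$ and $S_{N_*}(t)$ depend norm-continuously on $t$; as the rank of a projection is integer-valued and continuous, it is constant in $t$ and equals its value at $t=0$. Reading the free projections off from (i)--(iii) gives $\dim P_n^0=2$ for $n>0$ in the $Per^\pm$ cases and $\dim G_n^0=1$ in the Dirichlet case, hence (\ref{22.16}); summing the low-lying free eigenspaces, including the one-dimensional $E^0_0$ for $Per^+$, and counting the even, odd, or all integers in the relevant range produces the three values in (\ref{22.17}).

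The substantive part is the unconditional convergence in (\ref{22.18}), i.e.\ the Riesz-system property. Here I would set $P_n=P_n^0+K_n$ and expand the correction by the resolvent identity,
$$K_n=\frac{1}{2\pi i}\int_{|z-n^2|=r_n}\bigl[(z-L_{bc})^{-1}-(z-L^0_{bc})^{-1}\bigr]\,dz,$$
in order to prove that $\sum_{n>N_*}\|K_n\|^2<\infty$. Since $\{P_n^0\}$ is an orthogonal resolution of the identity for which $\sum_n P_n^0 f=f$ holds orthogonally, this quadratic closeness upgrades, by a Bari-type perturbation argument, to unconditional convergence of (\ref{22.18}) and to the claimed Riesz-system conclusion. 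The main obstacle is exactly the square-summable estimate on $K_n$, with constants decaying in $n$ and holding uniformly down to the singular $H^{-1}_{per}$ case; this is where the fundamental inequalities of \cite{KaMi01,DM5,DM15,DM21}, expressing the projection corrections through the Fourier coefficients of $v$, are indispensable. Once these Hilbert--Schmidt bounds are secured, the passage from \emph{quadratically close to an orthogonal projection system} to \emph{Riesz system} is routine.
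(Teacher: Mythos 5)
Your proposal is correct and takes essentially the same route the paper relies on: the paper establishes Proposition \ref{prop2.1} by citation to \cite[Theorem 2.8]{SS03} and to \cite{DM16}, and the machinery it records in Section 4 --- the quadratic closeness $\sum_{|n|>N}\|P_n-P_n^0\|^2<\infty$ together with the dimension matching $\dim S_N=\dim S_N^0$, fed into the Bari--Markus theorem \cite{GoKr69} --- is exactly your Neumann-series localization, rank-homotopy, and Bari-type upgrade. One small attributional correction: the square-summable projection bounds come from resolvent estimates such as \cite[Proposition 5]{DM5} and \cite{SS03,DM19}, not from the ``fundamental inequalities'' of \cite{KaMi01,DM5,DM15,DM21}, which relate eigenvalues ($\mu_n$, $\lambda_n^{\pm}$) to the quantities $\xi_n$, $\gamma_n$ and enter only in Section 7.
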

The latter is true not only for potentials $v \in L^2$ but in the
case $v\in H^{-1}_{per} $ as well. It has been proven by A. Savchuk
and A. Shkalikov \cite[Theorem 2.8]{SS03}. An alternative proof is
given by the authors in \cite{DM16}, see Theorem 1 and Proposition
8.
\bigskip

3. Next we remind  the basic fact about spectra decompositions and
spectral decompositions for Dirac operators
\begin{equation}
\label{12.1} Ly = i
\begin{pmatrix}
1 & 0 \\ 0 & -1
\end{pmatrix}
\frac{dy}{dx}  + vy,
\end{equation}
\begin{equation}
\label{12.2} v(x)=
\begin{pmatrix}
0  & P(x) \\ Q(x) & 0
\end{pmatrix},
\quad y =
\begin{pmatrix}
y_1\\ y_2
\end{pmatrix},
\end{equation}
with $L^2$--potential $v,$ i.e., $P,Q \in L^2 (I). $

We consider three types of boundary conditions:

(a) {\em periodic} $Per^+ : \quad y(0) = y(\pi),$ i.e., $ y_1 (0) =
y_1 (\pi) $ and $ y_2 (0) = y_2 (\pi); $

(b) {\em anti-periodic} $Per^- : \quad y(0) = - y(\pi),$ i.e., $ y_1
(0) = -y_1 (\pi) $ and $ y_2 (0) = -y_2 (\pi); $

(c) {\em Dirichlet} $Dir: \quad y_1 (0) = y_2 (0), \; y_1 (\pi) = y_2
(\pi).$

The corresponding closed operator with a domain
\begin{equation}
\label{11.3a}
 \Delta_{bc} = \left \{f \in (W_1^2 (I))^2  \; : \;
\; F =
\begin{pmatrix} f_1 \\ f_2 \end{pmatrix}
 \in (bc)  \right \}
\end{equation}
 will be denoted by $L_{bc}.$
 If $v=0,$ i.e., $P\equiv 0, Q \equiv 0,$ we write
$L^0_{bc}.$  Of course, it is easy to describe the spectra and
eigenfunctions for $L^0_{bc}:$

(a)  $Sp (L^0_{Per^+}) = \{n \;\text{even} \} = 2 \mathbb{Z}; $ each
number $n \in 2\mathbb{Z} $ is a double eigenvalue, and the
corresponding eigenspace is
\begin{equation}
\label{12.5} E^0_n = Span \{e^1_n, e^2_n\},
\end{equation}
 where
\begin{equation}
\label{12.6}
 \displaystyle e^1_n (x) =
\begin{pmatrix}
e^{-inx}\\ 0
\end{pmatrix},
\quad e^2_n (x) =
\begin{pmatrix}
0\\ e^{inx}
\end{pmatrix};
\end{equation}

(b)  $Sp (L^0_{Per^-}) = \{n \;\text{odd} \} = 2 \mathbb{Z}+ 1; $ the
corresponding eigenspaces $E_n^0 $ are given by (\ref{12.5}) and
(\ref{12.6}) but $ n \in 2\mathbb{Z} +1; $

(c) $ Sp (L^0_{Dir}) = \{n \in \mathbb{Z} \};$ each eigenvalue $n $
is simple. The corresponding normalized eigenfunction is
\begin{equation}
\label{12.7} g_n  (x) = \frac{1}{\sqrt{2}} \left (e^1_n + e^2_n
\right ), \quad n \in \mathbb{Z},
\end{equation}
so the corresponding (one-dimensional) eigenspace is
\begin{equation}
\label{12.8} G_n^0 = Span \{ g_n \}.
\end{equation}
\bigskip

4. Localization of spectra in the case of Dirac operators.

\begin{Proposition}
\label{prop2.2} (localization of spectra)  For Dirac operators
$L_{bc} (v)$ with $bc=Per^\pm, \, Dir,$  there is $N_* = N_* (v), $
such that
\begin{equation}
\label{22.108} Sp \, (L_{bc}) \subset \Pi_{N_*} \cup \bigcup_{n>N_*,
\, n\in \Gamma_{bc}} D(n^2, 1/4),
\end{equation}
where
\begin{equation}
\label{22.109}\Pi_{N}= \{z=x+iy\in \mathbb{C}: \; |x|,\,|y| < N^2 +
\frac{1}{4},
\end{equation}
and
\begin{equation}
\label{22.112} \Gamma_{bc}= \begin{cases} 2\mathbb{Z} & bc=
Per^+,\\  1+2\mathbb{Z} & bc= Per^-,\\
\mathbb{Z} & bc=Dir.
\end{cases}
\end{equation}
With the resolvent $R(z) = (z-L_{bc})^{-1}$ well defined in the
complement of $Sp \, (L_{bc}),$ we set
\begin{equation}
\label{22.113} S_{N_*} = \frac{1}{2\pi i}\int_{\partial \Pi_{N_*}}
(z-L_{bc})^{-1} dz,
\end{equation}
\begin{equation}
\label{22.114} P_n = \frac{1}{2\pi i}\int_{|z-n|=1/4} (z-L_{bc})^{-1}
dz, \quad |n|>N_*, \; n \in \Gamma_{bc},
\end{equation}
and
\begin{equation}
\label{22.115} S_N = S_{N_*} + \sum_{\tiny \begin{array}{c}  N_*+1 \leq |n| \leq N\\
n\in \Gamma_{bc}
\end{array}
} P_n.
\end{equation}
Then
\begin{equation}
\label{22.116} \dim P_n = \begin{cases}  2 & n \;\text{even}, \; bc=
Per^+, \\
2 & n \;\text{odd}, \; bc= Per^-, \\
1 & n \in \mathbb{Z}, \; bc = Dir,
\end{cases}
\end{equation}
and
\begin{equation}
\label{22.117} \dim S_{N_*} = \begin{cases} 2N_* +2 & bc = Per^+,\\
2N_* &   bc = Per^- \\ 2N_* +1  & bc= Dir.
\end{cases}
\end{equation}
In each case the series
\begin{equation}
\label{22.118} S_{N_*} f +\sum_{|n|>N_*, \, n \in \Gamma_{bc}} P_n f
= f \quad \forall f \in L^2 (I)
\end{equation}
converges unconditionally, so
\begin{equation}
\label{22.119} \{S_{N_*}, \;\;   P_n, \; |n|> N_*, \; n\in
\Gamma_{bc} \}
\end{equation}
is a Riesz system of projections.
\end{Proposition}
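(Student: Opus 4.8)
The plan is to treat $L_{bc}(v)=L^0_{bc}+V$ as a perturbation of the free Dirac operator, where $V$ is multiplication by the off-diagonal matrix $v$, and to control the perturbed resolvent $R(z)=(z-L_{bc})^{-1}$ on the contours $\partial\Pi_{N_*}$ and $\{|z-n|=1/4\}$ in terms of the free resolvent $R^0(z)=(z-L^0_{bc})^{-1}$. In the orthonormal eigenbasis $\{e^1_k,e^2_k\}$ of $L^0_{bc}$ the operator $R^0(z)$ is diagonal with entries $(z-k)^{-1}$, while the matrix entries of $V$ are, up to normalization, the Fourier coefficients $\hat P(\cdot),\hat Q(\cdot)\in\ell^2$. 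Since multiplication by an $L^2$ function is unbounded on $L^2$, I would not estimate $V$ directly; instead I would use the symmetrized operator $K(z)=R^0(z)^{1/2}\,V\,R^0(z)^{1/2}$ (the diagonal square root is bounded on the contours because $\mathrm{dist}(z,\mathbb{Z})\ge 1/4$ there) together with the algebraic identity
\[
R(z)=R^0(z)^{1/2}\,\bigl(I-K(z)\bigr)^{-1}\,R^0(z)^{1/2},
\]
valid wherever $\|K(z)\|<1$.

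The key step, and the main obstacle, is the uniform estimate
\[
\|K(z)\|_{HS}^2=\sum_{k,m}\frac{|V_{mk}|^2}{|z-k|\,|z-m|}\longrightarrow 0,\qquad N_*\to\infty,
\]
uniformly for $z\in\partial\Pi_{N_*}$ and for $z$ on each circle $|z-n|=1/4$ with $|n|>N_*$. Here one exploits that $V_{mk}$ depends only on $m\mp k$ and equals a Fourier coefficient of $P$ or $Q$; combining the $\ell^2$-summability of these coefficients with the lattice decay of $|z-k|^{-1}|z-m|^{-1}$ along the contours reduces the double sum to a tail of $\sum_j\bigl(|\hat P(j)|^2+|\hat Q(j)|^2\bigr)$, which is small once $N_*$ is large. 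This is precisely the type of fundamental estimate established for Dirac operators in \cite{DM21}, and I would invoke and adapt it rather than reprove it.

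Once $\|K(z)\|\le 1/2$ on these contours, the Neumann series for $(I-K(z))^{-1}$ shows that $z-L_{bc}$ is boundedly invertible there, so $Sp(L_{bc})$ avoids them; this yields the localization (\ref{22.108}) and makes the Cauchy integrals (\ref{22.113})--(\ref{22.114}) well-defined projections. For the ranks I would integrate $R(z)-R^0(z)$ over each circle $|z-n|=1/4$ to get $\|P_n-P_n^0\|<1$ for $|n|>N_*$, whence $\mathrm{rank}\,P_n=\mathrm{rank}\,P_n^0$, which gives (\ref{22.116}). The value of $\dim S_{N_*}$ in (\ref{22.117}) then follows from the same norm-closeness applied on $\partial\Pi_{N_*}$, by counting the free eigenvalues $n\in\Gamma_{bc}$ enclosed: for $Per^+$ the even integers $|n|\le N_*$ with double multiplicity give $2N_*+2$, and the counts for $Per^-$ and $Dir$ are analogous.

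Finally, the unconditional convergence of (\ref{22.118}) and the Riesz-system property (\ref{22.119}) follow from a Bari--Markus quadratic-closeness criterion. The same Hilbert--Schmidt bound yields $\sum_{|n|>N_*}\|P_n-P_n^0\|^2<\infty$, so the family $\{S_{N_*},P_n\}$ is quadratically close to the orthogonal spectral family $\{S_{N_*}^0,P_n^0\}$ of $L^0_{bc}$. Since the latter is a complete orthogonal (hence Riesz) system for which $f=S_{N_*}^0 f+\sum_n P_n^0 f$ converges unconditionally, the perturbed family is again a Riesz system of projections and (\ref{22.118}) converges unconditionally, completing the proof.
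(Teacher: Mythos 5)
Your overall strategy is the right one, and it is essentially the strategy behind the result the paper invokes: the paper gives no proof of Proposition \ref{prop2.2} at all, but simply cites \cite[Theorem 5.1]{DM20} (and \cite[Theorem 8.8]{Mi04} for smoother potentials), and those proofs run along your lines --- symmetrized resolvent $K(z)=R^0(z)^{1/2}VR^0(z)^{1/2}$, Hilbert--Schmidt smallness on contours, Cauchy--Riesz projections, rank equality via $\|P_n-P_n^0\|<1$, and the Bari--Markus theorem. One bookkeeping slip: the Dirac-operator estimates you propose to ``invoke and adapt'' live in \cite{DM20}, \cite{Mi04}, and \cite[Section 2.4]{DM15}; the reference \cite{DM21} you name treats Hill operators with singular potentials, not Dirac operators.

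There is, however, a genuine gap at the final step. You assert that ``the same Hilbert--Schmidt bound yields $\sum_{|n|>N_*}\|P_n-P_n^0\|^2<\infty$.'' It does not, at least not directly. The contour estimate gives $\|P_n-P_n^0\|\le C\,\epsilon_n^{1/2}$ with $\epsilon_n=\sup_{|z-n|=1/4}\|K(z)\|_{HS}^2$, so quadratic closeness by this route would require $\sum_n\epsilon_n<\infty$. But for a general $L^2$ potential one only has $\epsilon_n\to0$; in fact $\epsilon_n\gtrsim\sum_{l}|w(l)|^2/(1+|2n-l|)$, where $w$ denotes the combined Fourier coefficients of $P,Q$, and summing this over $n$ diverges for every nonzero $w$. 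So the crude bound can never produce the Bari--Markus hypothesis; this is exactly why \cite{DM20} is a separate paper. The standard repair is to split $R-R^0=R^{0,1/2}KR^{0,1/2}+R^{0,1/2}K^2(I-K)^{-1}R^{0,1/2}$ \emph{before} integrating over $|z-n|=1/4$. In the first-order term the diagonal residue vanishes (the $k=m=n$ entry integrates to zero), so its contribution to $P_n-P_n^0$ has matrix entries $V_{nm}/(n-m)$ and $V_{kn}/(n-k)$ only, and these are square-summable over $n$ by Fubini, since $\sum_n\sum_{j\neq0}|w(2n-j)|^2/j^2\le\|w\|_{\ell^2}^2\sum_{j\neq 0}j^{-2}$. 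The remainder is $O(\epsilon_n)$, and $\sum_n\epsilon_n^2<\infty$ does hold (Cauchy--Schwarz in the convolution variable, then Fubini, using that $\bigl(\log(2+|\sigma|)/(1+|\sigma|)\bigr)^2$ is summable). With this two-term splitting your argument closes; without it, the quadratic-closeness claim is unjustified precisely at the point where the theorem is hardest.
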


The latter is proven in \cite[Theorem 5.1]{DM20}. (Under more
restrictive assumption on the potential  $v \in H^\alpha, \; \alpha>
1/2, $ the fact that (\ref{22.119}) is a Riesz system of projections
has been proven in \cite[Theorem 8.8]{Mi04}.)

Propositions \ref{prop2.1} and \ref{prop2.2} guarantee the existence
of the level $N_*=N_* (v) $ when all formulas for $P_n, \, S_N, $
etc. become valid if $n > N_*, \, n \in \mathbb{N}$ (or $|n| > N_*,
\, n \in \mathbb{Z} $ in the Dirac case). In the next sections,
there are other formulas which are valid for large enough $n$ and
require different levels $N_*=N_* (v). $ But throughout the paper we
use one and the same letter $N_* $ to indicate by the inequalities
$n>N_*$ or $|n|>N_*$ that formulas hold for sufficiently large
indices.
\bigskip

5. Propositions  \ref{prop2.1} and \ref{prop2.2} allows us to apply
the Lyapunov--Schmidt projection method (see \cite[Lemma 21]{DM15})
and reduce the eigenvalue equation $Ly = \lambda y $ to a series of
eigenvalue equations in two-dimensional eigenspaces $E_n^0$ of the
free operator.

This leads to the following (see for Hill operators \cite[Section
2.2]{DM15} in the case $L^2$-potentials, and \cite[Lemma 6]{DM21} in
the case of $H^{-1}_{per}$-potentials; for Dirac operators, see
\cite[Section 2.4]{DM15}).

\begin{Lemma}
\label{lem1}

 (a) Let $L$ be a Hill operator with a potential
$v\in L^2$ or $v \in H^{-1}_{per}.$ Then, for large enough $n\in
\mathbb{N},$ there are functions $\alpha_n (v,z) $ and $ \beta^\pm_n
(v;z), \; |z| < n $ such that a number $\lambda = n^2 + z, \;|z| <
n/4, $ is a periodic (for even $n$) or anti-periodic (for odd $n$)
eigenvalue of  $L$ if and only if $z$ is an eigenvalue of the matrix
\begin{equation}
\label{p1}  \left [
\begin{array}{cc} \alpha_n (v,z)  & \beta^-_n (v;z)
\\ \beta^+_n (v;z) &  \alpha_n (v,z) \end{array}
\right ].
\end{equation}

 (b) Let $L$ be a Dirac operator with a potential
$v\in L^2.$ Then, for large enough $|n|, \; n\in \mathbb{Z},$ there
are functions $\alpha_n (v,z) $ and $ \beta^\pm_n (v;z), \; |z| < 1
$ such that a number $\lambda = n + z, \;|z| < 1/4, $ is a periodic
(for even $n$) or anti-periodic (for odd $n$) eigenvalue of  $L$ if
and only if $z$ is an eigenvalue of the matrix (\ref{p1}).

(c) A number $\lambda = n^2 + z^*, \;|z| < n/4, $ (respectively,
$\lambda = n + z, \;|z| < 1/4 $ in the Dirac case) is a periodic
(for even $n$) or anti-periodic (for odd $n$) eigenvalue of $L$ of
geometric multiplicity 2  if and only if $z^*$  is an eigenvalue of
the matrix (\ref{p1})  of geometric multiplicity 2.
\end{Lemma}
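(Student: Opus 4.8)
The plan is to perform the Lyapunov--Schmidt reduction explicitly and read the reduced matrix off the result. Write $L = L^0 + V$, where $V$ is multiplication by $v$ (in the singular $H^{-1}_{per}$ case interpreted through the quasi--derivative framework of \cite{DM21}), and let $P_n^0$ be the orthogonal projection onto the two--dimensional free eigenspace $E_n^0$, with $Q_n^0 = I - P_n^0$. Fix $\lambda = n^2 + z$ (respectively $\lambda = n + z$ in the Dirac case) with $z$ in the disc of the statement, and decompose a prospective eigenfunction as $y = u + \tilde u$ with $u = P_n^0 y \in E_n^0$ and $\tilde u = Q_n^0 y$. Applying $P_n^0$ and $Q_n^0$ to $(\lambda - L^0 - V)y = 0$ and using $L^0 u = n^2 u$ (resp.\ $L^0 u = n\,u$) splits the single equation into the coupled pair
\[
z\, u = P_n^0 V (u + \tilde u), \qquad (\lambda - L^0)\tilde u = Q_n^0 V (u + \tilde u).
\]

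First I would solve the second (complementary) equation for $\tilde u$ as a function of $u$. On the range of $Q_n^0$ the free resolvent $R^0(\lambda) = (\lambda - L^0)^{-1}$ is well defined whenever $\lambda$ avoids the remaining free eigenvalues, which the localization of Propositions \ref{prop2.1} and \ref{prop2.2} guarantees throughout the disc $|z| < n/4$ (resp.\ $|z| < 1/4$). Rewriting the equation as $(I - R^0(\lambda) Q_n^0 V)\tilde u = R^0(\lambda) Q_n^0 V u$, the point is to prove that $R^0(\lambda) Q_n^0 V$ has operator norm strictly below $1$ for all large $n$, uniformly in $z$ over the disc; then $I - R^0(\lambda)Q_n^0 V$ is boundedly invertible and $\tilde u = T_n(z)\,u$ with $T_n(z) = (I - R^0 Q_n^0 V)^{-1} R^0 Q_n^0 V$ depending analytically on $z$. \emph{This uniform resolvent bound is the main obstacle}: it rests on the decay of the Fourier coefficients of $v$ weighed against the denominators $\lambda - m^2$ (resp.\ $\lambda - m$) for $m \neq \pm n$, and in the singular case on the weighted estimates of \cite{DM21}. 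It is precisely the content of the Lyapunov--Schmidt step \cite[Lemma 21]{DM15} together with its $H^{-1}_{per}$ and Dirac analogues cited just before the statement.

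Substituting $\tilde u = T_n(z) u$ into the first equation yields the reduced two--dimensional problem $z\,u = M_n(z)\,u$, where $M_n(z) = P_n^0 V (I + T_n(z))\big|_{E_n^0}$. Expanding $M_n(z)$ in the basis $\{e^{inx}, e^{-inx}\}$ (resp.\ $\{e_n^1, e_n^2\}$) produces exactly the matrix (\ref{p1}): the off--diagonal entries $\beta_n^\pm(v,z)$ couple the two basis vectors, while the two diagonal entries coincide. The equality of the diagonal entries is the one genuinely structural point; in each order of the Neumann expansion of $T_n(z)$ the $(1,1)$ and $(2,2)$ contributions are interchanged by a reflection of the summation indices over the intermediate momenta (for the leading nontrivial order this is simply $j \mapsto -j$, which sends the denominator $z - 2nj - j^2$ to $z + 2nj - j^2$ and leaves the symmetric numerator fixed), so they agree term by term; the resulting functionals are those recorded in (\ref{22.34}) and (\ref{22.35}). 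This gives the equivalence in (a) and (b): if $\lambda$ is an eigenvalue with eigenfunction $y$, then $u = P_n^0 y \neq 0$ (otherwise $\tilde u$ would solve a homogeneous equation with the invertible operator above and vanish), so $u$ is an eigenvector of $M_n(z)$ for the eigenvalue $z$; conversely any eigenvector $u$ of $M_n(z)$ gives the eigenfunction $y = (I + T_n(z))u \neq 0$. Since $M_n(z)$ itself depends on $z$, the phrase ``$z$ is an eigenvalue of the matrix'' is the self--consistency condition $\det(zI - M_n(z)) = 0$.

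Finally, part (c) follows from the same correspondence read at the fixed value $z = z^*$. The linear map $u \mapsto (I + T_n(z^*))u$ is injective on $E_n^0$, because composing it with $P_n^0$ returns $u$ (as $T_n$ maps into the range of $Q_n^0$), and by the argument above it carries $\ker(z^* I - M_n(z^*))$ bijectively onto the geometric eigenspace $\ker(\lambda - L)$. Hence the two spaces have equal dimension, and in particular $\lambda$ has geometric multiplicity $2$ exactly when $M_n(z^*) = z^* I$, i.e.\ when $z^*$ is a geometrically double eigenvalue of the reduced matrix.
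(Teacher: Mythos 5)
Your proposal is correct and takes essentially the same route the paper does: the paper gives no in-text proof of Lemma \ref{lem1}, but points to exactly this Lyapunov--Schmidt reduction in \cite[Section 2.2, Lemma 21]{DM15}, \cite[Lemma 6]{DM21}, and \cite[Section 2.4]{DM15}, and your outline reconstructs that reduction while deferring the same key analytic input --- the uniform invertibility of $I - R^0(\lambda)Q_n^0 V$, which must be formulated in suitable weighted/Fourier-coefficient spaces since multiplication by $v \in L^2$ (let alone $v \in H^{-1}_{per}$) is not bounded on $L^2$ --- to those same sources. One cosmetic slip: the existence of the free resolvent on the range of $Q_n^0$ for $|z|<n/4$ is elementary (the distance from $\lambda$ to the remaining free eigenvalues is bounded below) and is not what Propositions \ref{prop2.1} and \ref{prop2.2} provide, since those localize the spectrum of the perturbed operator.
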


The functionals $\alpha_n (z;v) $ and $\beta^\pm_n (z;v)$ are well
defined for large enough $|n|$ by explicit expressions in terms of
the Fourier coefficients of the potential (see for Hill operators
with $L^2$-potentials \cite[Formulas (2.16)-(2.33)]{DM15}, for Dirac
operators \cite[Formulas (2.59)--(2.80)]{DM15}, and for Hill
operators with $H^{-1}_{per}$-potentials \cite[Formulas
(3.21)--(3.33)]{DM21}).

Here we provide formulas only for $\beta^\pm_n (v;z)$ in the case of
Hill operators with $H^{-1}_{per}$-potentials. Let $v $ be a
singular potential as in (\ref{22.2}), and
\begin{equation}
\label{22.54} v= w^\prime, \;  \quad  w= \sum_{m \in 2\mathbb{Z}}
W(m) e^{imx}.
\end{equation}
Then the Fourier coefficients of $v$ are given by
\begin{equation}
\label{22.55} V(m) = i m \, W(m), \quad m \in 2\mathbb{Z},
\end{equation}
and by \cite[Formulas (3.21)--(3.33)]{DM21} we have
\begin{equation}
\label{22.34} \beta^\pm_n (v;z) = V(\pm 2n) + \sum_{k=1}^\infty
S^\pm_k (n,z),
\end{equation}
with
\begin{equation}
\label{22.35} S^\pm_k (n,z) = \sum_{j_1, \ldots, j_k \neq \pm n}
\frac{V(\pm n-j_1)V(j_1 - j_2) \cdots V(j_{k-1} -j_k) V(j_k  \pm n)}
{(n^2 -j_1^2 +z) \cdots (n^2 - j_k^2 +z)}.
\end{equation}

Next we summarize some basic properties of $\alpha_n (z;v) $ and
$\beta^\pm_n (z;v).$

\begin{Proposition}
\label{bprop1} Let $v $ be a $ H^{-1}_{per} $--potential of the form
(\ref{22.2}), and let $L_{Per^\pm} $ be the corresponding Hill
operator.

(a) The functionals $\alpha_n (z;v) $ and $\beta^\pm_n (z;v) $
depend analytically on $z$  for $|z|< n. $ There exists a sequence
of positive numbers  $\varepsilon_n \to 0 $ such that for large
enough $n$
\begin{equation}
\label{1.136} |\alpha_n (v;z)|+|\beta^\pm_n (v;z)|  \leq
 n \cdot \varepsilon_n,  \quad  \; |z| \leq n/2,
\end{equation}
and
\begin{equation}
\label{1.137} \left |\frac{\partial\alpha_n}{\partial z} (v;z)
\right| + \left |\frac{\partial\beta^\pm_n}{\partial z} (v;z) \right
|\leq \varepsilon_n, \quad   \; |z| \leq n/4.
\end{equation}

(b) For large enough $n$ (even, if $bc= Per^+$ or odd, if $bc=
Per^-$), a number $\lambda = n^2+ z, \, $ $|z|< n/4, $ is an
eigenvalue of $L_{Per^\pm} $ if and only if $z$ satisfies the basic
equation
\begin{equation}
\label{be1} (z-\alpha_n (z;v))^2 = \beta^+_n (z;v) \beta^-_n (z;v).
\end{equation}

(c) For large enough $n,$ the equation (\ref{be1}) has exactly two
roots in the disc $|z| < n/4 $ counted with multiplicity.

\end{Proposition}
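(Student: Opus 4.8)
The three assertions are of different depth. Part (b) is an immediate reformulation of the Lyapunov--Schmidt reduction, part (c) is a Rouch\'e argument that uses only the bounds of part (a), and part (a) --- analyticity together with the two quantitative estimates (\ref{1.136})--(\ref{1.137}) --- carries essentially all of the analytic content. I begin with (b): by Lemma~\ref{lem1}(a), for large $n$ a number $\lambda=n^2+z$ with $|z|<n/4$ is a $Per^\pm$ eigenvalue of $L$ precisely when $z$ is an eigenvalue of the matrix (\ref{p1}), whose eigenvalues are $\alpha_n(z;v)\pm\sqrt{\beta_n^+(z;v)\beta_n^-(z;v)}$. Thus the condition ``$z$ equals one of these'' reads $z-\alpha_n(z;v)=\pm\sqrt{\beta_n^+(z;v)\beta_n^-(z;v)}$, which on squaring is exactly the basic equation (\ref{be1}).

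For part (a) the plan is to argue directly from the explicit series (\ref{22.34})--(\ref{22.35}). Since the Fourier coefficients are supported on $2\mathbb{Z}$ by (\ref{22.54}), every index $j\neq\pm n$ occurring in $S_k^\pm$ has the parity of $n$, whence $|n^2-j^2|=|n-j|\,|n+j|\ge 4n-4$; consequently for $|z|<n$ the denominators $n^2-j_i^2+z$ are bounded away from $0$, and term-by-term analyticity on $|z|<n$ follows once the series is shown to converge. The estimates rest on two facts: the lower bound $|n^2-j^2+z|\ge\tfrac12|n-j|\,|n+j|$, valid for $|z|\le n/2$, and the relation $V(m)=imW(m)$ from (\ref{22.55}), which lets the linear factors in the numerators of (\ref{22.35}) cancel against the corresponding $|n-j_i|$ in the denominators. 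After this cancellation the multiple sum defining $S_k^\pm$ is majorized by the $k$-th iterate of a transfer operator assembled from the sequence $\big(W(m)\big)\in\ell^2$, and because its entries involve only coefficients $W(m)$ with indices tied to $\pm n$, its norm tends to $0$ as $n\to\infty$. Summing the resulting geometric series gives $\sum_{k\ge1}|S_k^\pm|=o(n)$, while the leading term satisfies $|V(\pm 2n)|=2n\,|W(\pm 2n)|\le n\varepsilon_n$ since $|W(\pm 2n)|\to0$; together these yield (\ref{1.136}), and the analogous series for $\alpha_n$ given in \cite{DM21} is estimated by the same method.

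For the derivative bound (\ref{1.137}) I would differentiate the series term by term. The leading term $V(\pm2n)$ is independent of $z$, and $\partial_z$ acting on one factor $1/(n^2-j_\ell^2+z)$ produces an extra factor $1/(n^2-j_\ell^2+z)$ of size $O(1/n)$; the combinatorial factor $k$ from the choice of the differentiated denominator is absorbed by the geometric decay in $k$, so one gains a full power of $1/n$ and obtains the bound $\varepsilon_n$ in place of $n\varepsilon_n$. Part (c) then follows from Rouch\'e's theorem applied on the circle $|z|=n/4$ to the function
\[
F(z):=(z-\alpha_n(z;v))^2-\beta_n^+(z;v)\beta_n^-(z;v),
\]
compared with $z^2$: by (\ref{1.136}) one has, on this circle, $|F(z)-z^2|\le 2|z|\,|\alpha_n|+|\alpha_n|^2+|\beta_n^+|\,|\beta_n^-|\le\tfrac12 n^2\varepsilon_n+2n^2\varepsilon_n^2$, which is $<|z^2|=n^2/16$ for all large $n$; hence $F$ and $z^2$ have the same number of zeros in $|z|<n/4$, namely two counted with multiplicity.

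The main obstacle is the transfer-operator estimate inside part (a): controlling the multiple sums $S_k^\pm$ uniformly in $z$ and establishing geometric decay in $k$ with ratio $\to0$. This is exactly where the cancellation $V(m)=imW(m)$ and the $\ell^2$-membership of $\big(W(m)\big)$ must be combined quantitatively, and it is the content of the fundamental inequalities of \cite{DM21} (and of \cite{DM15} in the $L^2$ case), which I would adapt rather than reprove from scratch.
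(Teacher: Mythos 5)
Your proposal is correct and follows essentially the same route as the paper: part (b) is read off from Lemma~\ref{lem1} via the characteristic polynomial of the matrix (\ref{p1}), and part (c) is exactly the paper's Rouch\'e argument comparing $(z-\alpha_n)^2-\beta_n^+\beta_n^-$ with $z^2$ on $|z|=n/4$ using (\ref{1.136}). For part (a) the paper simply cites \cite[Proposition 15]{DM21}, and your sketch of the series estimates --- with its explicit deferral of the transfer-operator bound to the fundamental inequalities of \cite{DM21} --- amounts to the same reliance on that reference.
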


\begin{proof}
Part (a) is proved in \cite[Proposition 15]{DM21}. Lemma~\ref{lem1}
implies Part (b).  By (\ref{1.136}), $\sup \{|\frac{1}{z}\alpha_n
(z)|, \; |z|= n/4 \} \to 0 $ and $\sup \{|\frac{1}{z}\beta^\pm_n
(z)|,\;|z|= n/4 \} \to 0. $ Therefore, Part (c) follows from the
Rouch\'e theorem.
\end{proof}

\begin{Proposition}
\label{bprop} Let $L_{Per^\pm} $ be a Dirac operator with
$L^2$-potential.

(a) The functionals $\alpha_n (z;v) $ and $\beta^\pm_n (z;v) $
depend analytically on $z$  for $|z| < 1. $ There exists a sequence
of positive numbers  $\varepsilon_n \to 0 $ such that for large
enough $|n|$
\begin{equation}
\label{1.36} |\alpha_n (v;z)|+ \, |\beta^\pm_n (v;z)| \leq
\varepsilon_n, \quad  |z| \leq 1/2,
\end{equation}
and
\begin{equation}
\label{1.37} \left |\frac{\partial\alpha_n}{\partial z} (v;z) \right|
+ \left |\frac{\partial\beta^\pm_n}{\partial z} (v;z) \right |\leq
\varepsilon_n, \quad |z| \leq 1/4.
\end{equation}

(b) For large enough $|n|,$ ($n$ even, if $bc= Per^+$ or odd, if $bc=
Per^-$), the number $\lambda = n+ z, $ $ z\in D=\{\zeta: |\zeta| \leq
1/4\}, $ is an eigenvalue of $L_{Per^\pm} $ if and only if
 $z\in D $ satisfies the basic equation
\begin{equation}
\label{be} (z-\alpha_n (z;v))^2 = \beta^+_n (z;v) \beta^-_n (z,v),
\end{equation}

(c) For large enough $|n|,$ the equation (\ref{be}) has exactly two
(counted with multiplicity) roots in $D.$
\end{Proposition}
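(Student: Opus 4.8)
The plan is to follow verbatim the scheme already used for the Hill case in Proposition~\ref{bprop1}, adapting only the constants to the Dirac setting, where the localization discs have fixed radius $1/4$ (rather than $n/4$) and the functionals are of size $O(\varepsilon_n)$ rather than $O(n\varepsilon_n)$. All three parts have exact analogues in the proof of Proposition~\ref{bprop1}, so the work lies almost entirely in Part~(a).

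For Part~(a) I would invoke the explicit series representations of $\alpha_n(z;v)$ and $\beta_n^\pm(z;v)$ in terms of the Fourier coefficients of $P$ and $Q$, namely the Dirac analogues of (\ref{22.34})--(\ref{22.35}) recorded in \cite[Formulas (2.59)--(2.80)]{DM15}. Analyticity of these functionals in $z$ on $|z|<1$ is immediate from the locally uniform convergence of the defining series. The uniform envelope (\ref{1.36}) and the derivative bound (\ref{1.37}) encode the $L^2$-integrability of the potential: since $\sum_m (|P(m)|^2 + |Q(m)|^2) < \infty$, the tails of the Fourier coefficients decay, and the multiple sums $S_k^\pm$ can be controlled by Cauchy--Schwarz together with the estimates on the small denominators, producing a common sequence $\varepsilon_n \to 0$ dominating $|\alpha_n| + |\beta_n^\pm|$ on $|z| \le 1/2$ and their $z$-derivatives on $|z| \le 1/4$. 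This is the technical core; it is established in \cite{DM15} and I would simply cite it.

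For Part~(b) I would apply Lemma~\ref{lem1}(b): a number $\lambda = n + z$ with $z \in D$ is a periodic (even $n$) or anti-periodic (odd $n$) eigenvalue of $L_{Per^\pm}$ exactly when $z$ is an eigenvalue of the matrix (\ref{p1}) evaluated at that same $z$. Writing out the characteristic determinant of (\ref{p1}) gives $(z-\alpha_n(z;v))^2 - \beta_n^+(z;v)\beta_n^-(z;v) = 0$, which is precisely the basic equation (\ref{be}). Thus (b) is a direct transcription of Lemma~\ref{lem1}(b), with no extra argument required.

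For Part~(c) I would use Rouch\'e's theorem on the circle $|z| = 1/4$, just as in Proposition~\ref{bprop1}(c). Put $f(z) = (z-\alpha_n(z))^2$ and $g(z) = \beta_n^+(z)\beta_n^-(z)$. By (\ref{1.36}), on $|z| = 1/4$ one has $|f(z)| \ge (1/4 - \varepsilon_n)^2$ and $|g(z)| \le \varepsilon_n^2$, so for $|n|$ large enough that $\varepsilon_n < 1/8$ the boundary comparison $|g| < |f|$ holds and $f - g$ has the same number of zeros in $D$ as $f$. A second application of Rouch\'e comparing $z - \alpha_n(z)$ with $z$ (legitimate since $|\alpha_n| \le \varepsilon_n < 1/4$ on the circle) shows $z - \alpha_n(z)$ has a single zero in $D$, and the derivative bound (\ref{1.37}) gives $1 - \partial_z\alpha_n \neq 0$, so that zero is simple; hence $f = (z-\alpha_n)^2$ has exactly two zeros counted with multiplicity, and therefore so does (\ref{be}). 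The only genuinely substantive step is Part~(a); Parts~(b) and~(c) are routine once the envelope $\varepsilon_n \to 0$ and the derivative estimate are in hand.
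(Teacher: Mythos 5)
Your proposal matches the paper's proof in all essentials: Part (a) is delegated to \cite{DM15} (the paper cites Proposition 35 there), Part (b) is read off from Lemma \ref{lem1}, and Part (c) is Rouch\'e's theorem using the smallness (\ref{1.36}) of $\alpha_n$ and $\beta_n^\pm$ on $|z|=1/4$. Your two-step Rouch\'e argument (first removing $\beta_n^+\beta_n^-$, then comparing $z-\alpha_n$ with $z$) is a slightly more detailed but equivalent version of the paper's one-line comparison, and the simplicity check via (\ref{1.37}) is redundant since Rouch\'e already counts zeros with multiplicity.
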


\begin{proof}
Part (a) is proved in \cite[Proposition 35]{DM15}. Lemma~\ref{lem1}
implies Part (b).  By (\ref{1.36}), $\sup_D |\alpha_n (z)| \to 0 $
and $\sup_D |\beta^\pm_n (z)| \to 0 $ as $n\to \infty. $ Therefore,
Part (c) follows from the Rouch\'e theorem.
\end{proof}
\bigskip

\section{Elementary geometry of bases in a Banach space}

In this section we give a few well-known facts about geometry and
bases in Banach and Hilbert spaces -- see
\cite{KaSa89,LT-1,LT-2,Di92, KPS82}.

1. Let  $\{u_k \in X, \; \psi_k \in X^\prime\}_{k\in \mathbb{N}}$
be a biorthogonal system in a Banach space $X$, i. e.,
\begin{equation}
\label{3.1} \psi_k (u_j) =\begin{cases} 1, &  k=j,\\
0,  &  k\neq j
\end{cases} \quad   j,k \in \mathbb{N}.
\end{equation}
The system $\{u_k\}$ is called {\em a basis, or a Shauder basis in}
$Y$, its closed linear span if
\begin{equation}
\label{3.2} \lim_{N\to \infty} \sum_{k=1}^N \psi_k (y) u_k =y, \quad
\forall y \in Y.
\end{equation}
Put
\begin{equation}
\label{3.3} Q_m = q_{2m-1} +q_{2m},  \quad \text{where} \quad q_j
(x) = \psi_j (x) u_j, \;\; j \in \mathbb{N}
\end{equation}
are one-dimensional projections so
\begin{equation}
\label{3.4} \|q_j\| = \|u_j\| \cdot \|\psi_j\|.
\end{equation}
Let us assume that
\begin{equation}
\label{3.5} \lim_{M\to \infty}  \sum_{m=1}^M  Q_m y = y \quad
\forall y \in Y.
\end{equation}
In this case, certainly
\begin{equation}
\label{3.5a} \sup_m \|Q_m\| = C < \infty.
\end{equation}

Notice that partial sums in (\ref{3.5}) are equal to partial sums in
(\ref{3.2}) with even indices. But
\begin{equation}
\label{3.6} \sum_{k=1}^{2t+1} \psi_k (y) u_k = \left (\sum_{m=1}^t
Q_m y \right ) + \psi_{2t+1}(y) u_{2t+1}.
\end{equation}
These elementary identities together with (\ref{3.1}) explain the
following.
\begin{Lemma}
\label{lem3.1} If $\{u_k\}_1^\infty$ is a basis in $Y,$ i.e.,
(\ref{3.2}) holds then
\begin{equation}
\label{3.7} T \equiv \sup_j \|q_j\| < \infty.
\end{equation}
Under the assumption  (\ref{3.5}) if  (\ref{3.7}) holds then $\{u_k
\}_1^\infty$ is a basis in $Y.$
\end{Lemma}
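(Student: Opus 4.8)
The statement splits into two implications, and I would treat them separately since they rest on different tools. The forward direction---that a basis forces $T<\infty$---is a soft functional-analytic argument via the uniform boundedness principle, whereas the converse---that under (\ref{3.5}) the bound (\ref{3.7}) yields a basis---is a hands-on computation exploiting the biorthogonality (\ref{3.1}) together with the identity (\ref{3.6}).

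For the forward implication the plan is to introduce the partial-sum operators $S_N y = \sum_{k=1}^N \psi_k(y)u_k$ on $Y$. Since each $\psi_k$ lies in $X'$ and the $u_k$ are fixed, every $S_N$ is a bounded linear operator on $Y$, and $Y$, being the closed linear span of $\{u_k\}$ in the Banach space $X$, is itself complete. The basis hypothesis (\ref{3.2}) says $S_N y \to y$ for every $y$, so $\sup_N \|S_N y\| < \infty$ pointwise; Banach--Steinhaus then gives $M := \sup_N \|S_N\| < \infty$. Writing $q_j = S_j - S_{j-1}$ (with $S_0 = 0$), I would conclude $\|q_j\| \le 2M$, hence $T = \sup_j \|q_j\| \le 2M < \infty$.

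For the converse I would start from (\ref{3.6}), which expresses the odd partial sum $\sum_{k=1}^{2t+1}\psi_k(y)u_k$ as $\big(\sum_{m=1}^t Q_m y\big) + q_{2t+1}y$. The even partial sums coincide with $\sum_{m=1}^t Q_m y$, and by assumption (\ref{3.5}) these converge to $y$; in particular the increments satisfy $Q_m y \to 0$. It then remains only to show that the correction term $q_{2t+1}y$ vanishes in the limit, after which (\ref{3.2}) follows for partial sums of both parities.

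The crux---and the one step that is not automatic---is controlling $q_{2t+1}y$: the bound (\ref{3.7}) alone only gives $\|q_{2t+1}y\| \le T\|y\|$, which does not tend to $0$. The device that rescues the argument is that biorthogonality (\ref{3.1}) makes the $q_j$ mutually annihilating idempotents, i.e. $q_{2t+1}^2 = q_{2t+1}$ and $q_{2t+1}q_{2t+2} = 0$, so that $q_{2t+1} = q_{2t+1}Q_{t+1}$. Combining this factorization with (\ref{3.7}) yields $\|q_{2t+1}y\| \le T\,\|Q_{t+1}y\| \to 0$, which finishes the proof. I expect this factorization to be the main obstacle to spot, as everything else is routine once it is in hand.
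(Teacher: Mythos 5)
Your proof is correct and is essentially the paper's own argument: the paper only sketches the proof (``these elementary identities together with (\ref{3.1}) explain the following''), and your completion via Banach--Steinhaus for the forward direction and the identity (\ref{3.6}) for the converse is exactly what is intended. In particular, the factorization $q_{2t+1}=q_{2t+1}Q_{t+1}$ that you single out as the crux is precisely the relation the paper itself records just afterwards, in (\ref{3.10}) and (\ref{3.16}).
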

\bigskip

2. What does happen inside of 2D subspaces $E_m =Ran \, Q_m, \; m
\in \mathbb{N}?$

Let $\{u_1, u_2\},\; \|u_j\|=1, $ be a basis in $E_m$ and  let
$\psi_1, \psi_2 $ be the corresponding biorthogonal functionals, so
\begin{equation}
\label{3.14} h= \psi_1 (h) u_1 + \psi_2 (h) u_2 \quad \forall h \in
E_m.
\end{equation}
 To avoid any confusion
let us notice that for $j=2m-1, 2m $
\begin{equation}
\label{3.16} \psi_j (y)  =\psi_j (Q_m y) \quad \forall y \in Y,
\end{equation}
and if (\ref{3.5}) holds then with (\ref{3.5a})
\begin{equation}
\label{3.17} \|Q_m y \| \leq C \|y\|.
\end{equation}
Therefore,
\begin{equation}
\label{3.18} \|\psi_j\| \geq \sup\{|\psi_j (w)|: \; \|w\|=1, \; w
\in E_m\}
\end{equation}
$$
\geq \sup\{|\langle \frac{1}{C} Q_m y, \psi_j  \rangle|: \; \|y\|=1,
\; y \in Y\} =\frac{1}{C} \|\psi_j\|,
$$
so
\begin{equation}
\label{3.19} \|\psi_j\| \leq C \kappa_j, \quad \kappa_j \leq
\|\psi_j\|,
\end{equation}
i.e.,
\begin{equation}
\label{3.19a} \kappa_j \equiv \|\psi_j|E_m\| \leq \|\psi_j|Y\| \leq
C \kappa_j.
\end{equation}
In a Hilbert space case, elementary straightforward estimates show
that for $j = 1, 2$
\begin{equation}
\label{3.15} \kappa_j = \sup\{|\psi_j (w)| : \; \|w\|=1, \; w \in
\mathbb{C}^2\} = \left (1- |\langle u_1, u_2 \rangle|
\right)^{-1/2}.
\end{equation}
We use this fact when analyzing subspaces $E_m$ and their bases
$\{u_{2m-1}, u_{2m}\},$ $m \in \mathbb{N}.$
\bigskip

3. Now we consider separable Hilbert spaces $H$. We say that the
system $\{Q_m\} \in (\ref{3.3})$ is {\em a Riesz system, or an
unconditional 2D-block basis in} $Y$ if for some $C > 0$
\begin{equation}
\label{3.8} \|\sum_{m \in F} Q_m\|\leq C  \quad \text{for any finite
subset}  \;\; F \subset \mathbb{N}.
\end{equation}
\begin{Lemma}
\label{lem3.2} Assume the system of 2D projections ${Q_m} \in
(\ref{3.3})$ in a Hilbert space $H$ is a Riesz system, i. e.,
(\ref{3.8}) holds. If $\{u_k\}_1^\infty $ is a basis in $Y \subset
H$ then it is an unconditional basis in $Y.$
\end{Lemma}

\begin{proof}
Proof is based on the Orlicz \cite{Or33} lemma:
\begin{Lemma}
\label{lem3.3} (\ref{3.8}) holds for the system ${Q_m} \in
(\ref{3.3})$ in a Hilbert space if and only if for some constant
$C_1
> 0$
\begin{equation}
\label{3.9}  \frac{1}{C_1^2} \|y\|^2 \leq \sum_m \|Q_m y\|^2 \leq
C_1^2 \|y\|^2 \quad \forall y\in Y.
\end{equation}
\end{Lemma}

By Lemma \ref{lem3.1} and (\ref{3.7}), (\ref{3.4}) the norms of 1D
projections $q_j$ are uniformly bounded. By (\ref{3.1})
\begin{equation}
\label{3.10} q_j Q_m = \begin{cases} q_j    &     \text{if} \;\;j=
2m-1, \,2m\\
0  &  \text{otherwise}
\end{cases}
\end{equation}
so for $j= 2m-1, \, 2m $
\begin{equation}
\label{3.11} \|q_j y\| = \| q_j Q_m y\| \leq M \| Q_m y\| \leq
(\|q_{2m-1}y\| +\|q_{2m}y\|) \leq 2 M \| Q_m y\|.
\end{equation}
Therefore
\begin{equation}
\label{3.13} \frac{1}{4M^2} \left ( \|q_{2m-1}y\|^2 +\|q_{2m}y\|^2
\right )  \leq \|Q_m y\|^2  \leq 2M^2 \left ( \|q_{2m-1}y\|^2
+\|q_{2m}y\|^2 \right )
\end{equation}
and with $C_1 = 2M$ the condition (\ref{3.8}) holds for the system
of 1D projections $\{q_j\}.$  It guarantees that $\{q_j\}$ is a
Riesz system and $\{u_k\}$ is an unconditional basis in $Y$.
\end{proof}
\bigskip

4. Now we are ready to claim the following.

\begin{Criterion}
\label{crit6} With notations (\ref{3.1}), (\ref{3.3}) let us assume
that the system of 2D projections $\{Q_m\}$ is a Riesz system in a
Hilbert space. If a normalized system
\begin{equation}
\label{3.20} \{u_k\}, \quad \|u_k\|=1,
\end{equation}
is a basis in $Y$ then
\begin{equation}
\label{3.21}  \kappa := \sup \,\{(1-|\langle u_{2m-1}, u_{2m}
\rangle|^2 )^{-1/2}: \; m \in \mathbb{N}\} <\infty.
\end{equation}
If the condition (\ref{3.21}) holds then $\{u_k\}$ is a normalized
unconditional basis, that is a Riesz basis in $Y$.
\end{Criterion}

\begin{Corollary}
\label{cor7} If (\ref{3.8}) holds in a Hilbert space $H$ the system
$\{u_k\}_1^\infty \in (\ref{3.20}), (\ref{3.1})$ is a Riesz basis if
and only if it is a basis.
\end{Corollary}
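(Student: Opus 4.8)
The plan is to read Corollary~\ref{cor7} as a mere repackaging of Criterion~\ref{crit6}: the standing hypothesis~(\ref{3.8}) that the 2D-block system $\{Q_m\}$ is a Riesz system is precisely the hypothesis of that Criterion, so I expect to obtain the equivalence by chaining the two implications already established there, together with one trivial implication that holds unconditionally.

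First I would dispose of the easy direction. If the normalized system $\{u_k\}\in(\ref{3.20}),(\ref{3.1})$ is a Riesz basis in $Y$, then by definition it is an unconditional normalized basis, and in particular a (Schauder) basis in $Y$. This implication does not even require~(\ref{3.8}); it is built into the meaning of ``Riesz basis.'' So the content of the Corollary lies entirely in the converse.

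For the converse I would invoke Criterion~\ref{crit6} directly, under the assumption~(\ref{3.8}). Suppose $\{u_k\}$ is a basis in $Y$. The first assertion of Criterion~\ref{crit6} then yields that the quantity $\kappa$ of~(\ref{3.21}) is finite, and the second assertion of the same Criterion upgrades this finiteness to the conclusion that $\{u_k\}$ is a Riesz basis in $Y$. Composing these two statements gives ``basis $\Rightarrow$ Riesz basis,'' which combined with the trivial direction above completes the equivalence. I do not anticipate a genuine obstacle: all the analytic work (the use of the Orlicz Lemma~\ref{lem3.3}, the uniform boundedness of the $q_j$ via Lemmas~\ref{lem3.1} and~\ref{lem3.2}, and the Hilbert-space identity~(\ref{3.15})) has already been carried out in the proofs leading to Criterion~\ref{crit6}. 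The only point worth flagging explicitly is that the hypothesis~(\ref{3.8}) is used solely for the nontrivial ``basis $\Rightarrow$ Riesz basis'' direction, the reverse implication being automatic.
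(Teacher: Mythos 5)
Your proposal is correct and follows exactly the route the paper intends: Corollary~\ref{cor7} is stated immediately after Criterion~\ref{crit6} with no separate proof precisely because it is the composition of the two implications of that Criterion (basis $\Rightarrow$ (\ref{3.21}) $\Rightarrow$ Riesz basis) together with the trivial converse. Your explicit remark that hypothesis~(\ref{3.8}) is needed only for the nontrivial direction is accurate and consistent with the paper's usage.
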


\section{Moving from geometric criterion to Hill and Dirac operators}

1. The basic assumption in the geometric Criterion \ref{crit6} is
{\em the property of a system of projections $\{Q_m\}$ in a Hilbert
space to be a Riesz system.}

When we analyze systems of projections $\{P_n, \; |n|\geq N_*\}$
coming from Hill or Dirac operators, then it is a fundamental fact
that {\em they are Riesz systems}.

If $v \in L^2$ this has been understood since 1980's
(\cite{Sh79,Sh82,Sh83}). To make technically formal reference let us
mention \cite[Proposition 5]{DM5}, where it is shown that
\begin{equation}
 \label{2.1}
\|P_n -P_n^0\|_{2 \to \infty}  \leq C \frac{\|v\|_2}{n},
\end{equation}
so certainly
\begin{equation}
 \label{2.2}
\sum_{|n|>N} \|P_n -P_n^0\|^2_{2 \to 2}  < \infty
\end{equation}
and with
\begin{equation}
 \label{2.3}
\dim S_{N} =\dim S^0_{N}
\end{equation}
the Bari-Markus theorem \cite[Ch.6, Sect. 5.3, Theorem 5.2]{GoKr69}
implies that the series converge unconditionally.

A. Savchuk and  Shkalikov \cite[Theorem 2.4]{SS03} showed that
(\ref{2.2}) - (\ref{2.3}) hold if $v \in H^{-1}_{per} $ and $bc=
Per^\pm.$ An alternative proof has been given by the authors -- see
Theorem 1 and Proposition 8 in \cite{DM19}.

Finally, in the case of one dimensional Dirac operators we proved
(\ref{2.2}) - (\ref{2.3}) if $v \in L^2 $ and $bc = Per^\pm$ or
$Dir$ (see \cite{DM20}, Theorems 3.1 and 5.1). Later  we proved
(\ref{2.2}) - (\ref{2.3}) for {\em arbitrary} regular boundary
condition  -- see Theorems 15 and 20 in \cite{DM23}; however,
 we do not use these results from \cite{DM23} in the present paper.
Certainly in all these cases
\begin{equation}
 \label{2.4}
\|P_n -P_n^0\|_2  \to 0 \quad \text{and} \quad \|P_n\|_2 \leq 3/2
\quad \text{for} \;\; |n|>N_*.
\end{equation}

These bibliography references justify applicability of Criterion
\ref{crit6} when we are trying to give different analytic criteria
for Riesz basis property of the root function system of specific
differential operators.

Of course, Corollary \ref{cor7} indicates that in a Hilbert space
there is no separate question about Schauder basis property. If
$\{Q_m\},$ or $\{S_N; \; P_n, \, |n|\geq N \}$ is a Riesz system
such that $\dim Q_m =2, \; \dim P_n =2, $  then the properties of
the system $\{u_{2m-1}, \, u_{2m}\}$ to be a Riesz basis or to be a
Schauder basis are identical. Therefore, to talk about two
properties is semantically artificial.
\bigskip

 2. Let us define the root function system $\{u_j \}$
  which will play a special role in our analysis in
Sections 5 and 6 and in Main Theorem (Theorem \ref{thmM}). Section 3
and Criterion \ref{crit6} use an indexation by natural numbers, i.
e., $m \in \mathbb{N}$. But in the case of {\em Riesz bases} (or
{\em unconditional convergence of series}) it means that we can
ignore the ordering in $\mathbb{N}$, consider any countable set of
indices and use all related statements from Section 3. Of course, in
the case of bases which are not Riesz bases we should be accurate
when we use statements from Section 3  -- this is important in
Section 6.

\begin{Remark}
\label{rem13} In the case of Hill operators, $\Gamma_{bc}  \in
(\ref{22.12})$ as a subset of $\mathbb{N}$ has a natural ordering
and we have no confusion in defining the sum in (\ref{22.18}) --
this is
$$ \lim_{N \to \infty }
\sum_{\tiny \begin{array}{c}  N_* < n \leq N\\
n\in \Gamma_{bc} \end{array}}  $$ if this limit does exists. However
for Dirac operators $\Gamma_{bc} \in (\ref{22.112}) $ are subsets in
$\mathbb{Z};$ we have to accept convention to define the sum in
(\ref{22.118}) as
$$
\lim_{N \to \infty } \sum_{\tiny \begin{array}{c}  N_* < |n| \leq N \\
 n\in \Gamma_{bc} \end{array}} \qquad \text{and} \quad
\lim_{N \to \infty } \sum_{\tiny \begin{array}{c}  -N < n \leq N+1
\\ n\in \Gamma_{bc}, \,|n|>N_* \end{array}}
$$
if both these limits exist and are equal. Such understanding is in
accordance with the choice of contours in (\ref{22.109})  and
(\ref{22.113}).

\end{Remark}

 But in all {\em four cases} --
 $Per^+$ and $Per^-$ for both Hill and Dirac operators -- {\em the
 systems of projections
\begin{equation}
\label{42.1} \{S_{N_*}, \;   P_n, \,|n| >N_*, \, n \in \Gamma_{bc}
\}
\end{equation}
given in (\ref{22.13}) - (\ref{22.17}) or (\ref{22.113}) -
(\ref{22.117}) are Riesz systems of projections} as (\ref{22.18})
and (\ref{22.118}) tell us.

Now we define three sets of indices:
\begin{equation}
\label{42.2} \mathcal{M} =\{m \in \Gamma_{bc}: \; |m|>N_*, \;
\lambda_m^+ - \lambda_m^- \neq 0 \},
\end{equation}
\begin{equation}
\label{42.3} \mathcal{M}_1 =\{m \in \Gamma_{bc}: \; |m|>N_*, \;
\lambda_m^+ - \lambda_m^- = 0, \;\; P_m L_{bc} P_m = \lambda \cdot
1_{E_m} \},
\end{equation}
i. e., ${\, \lambda_m^+ \;}$ is a double eigenvalue of algebraic and
geometric multiplicities 2;
\begin{equation}
\label{42.4} \mathcal{M}_2 =\{m \in \Gamma_{bc}: \; |m|>N_*, \;
\lambda_m^+ - \lambda_m^- = 0, \;\; P_m L_{bc} P_m \quad \text{is a
Jordan matrix} \},
\end{equation}
i. e., $\, \lambda_m^+ \; $ is a double eigenvalue of algebraic
multiplicity 2 and geometric multiplicity 1.

If $m \in \mathcal{M}, $ we choose $(u_{2m-1}, \, u_{2m})$ in such a
way that
\begin{equation}
\label{42.6} Lu_{2m} =\lambda_m^+ u_{2m}, \quad Lu_{2m-1}
=\lambda_m^- u_{2m-1},
\end{equation}
\begin{equation}
\label{42.6a}  \|u_j \| =1,  \quad j \in \mathbb{N}.
\end{equation}

If $m \in \mathcal{M}_1$ choose any pair of orthogonal normalized
vectors in $E_m$
\begin{equation}
\label{42.7} \langle  u_{2m-1}, \, u_{2m} \rangle =0.
\end{equation}
\bigskip

3. For $ m \in \mathcal{M}_2$  we consider two different options to
choose root functions for a basis. \vspace{3mm}

 {\bf Option 1.} If  $ m \in \mathcal{M}_2, $ then there is only one
 (up to constant factor) normalized eigenvector $f \in E_m,$
\begin{equation}
\label{42.8} L f = \lambda^+_m f, \quad \|f\| =1,
\end{equation}
so we choose
\begin{equation}
\label{42.9} u_{2m}=f, \quad  u_{2m-1} \perp u_{2m}, \quad
\|u_{2m-1}\|  =1.
\end{equation}
Such a pair $(u_{2m-1},\, u_{2m}), \; m \in \mathcal{M}_2$ -- as for
$ m \in \mathcal{M}_1 $ -- is a nice basis in $ E_m,$ so it will not
be an obstacle for Riesz basisness of the larger system (see Lemmas
\ref{lem3.2} and \ref{lem3.3}) which contains $\{u_{2m-1},
u_{2m}\}.$ \vspace{3mm}

{\bf Option 2.} We choose $u_{2m}$ as in Option 1, and we choose
$u_{2m-1} \in (\ref{42.7}) $ to be an {\em associated} function,
i.e.,
\begin{equation}
\label{42.10} L_{bc} u_{2m} = \lambda_m^+ u_{2m}, \quad L_{bc}
u_{2m-1} = \lambda_m^+ u_{2m-1} +u_{2m}.
\end{equation}
Since we choose $u_{2m-1}$ to satisfy (\ref{42.10}) and
(\ref{42.7}), it is uniquely defined but its norm $\|u_{2m-1}\|$ is
out of our control.

For Hill operators with potentials in $L^1$ A. Shkalikov and O.
Veliev \cite[Theorem 1, Step 1]{ShVe09} observed that if $M_2$ is
infinite then
\begin{equation}
\label{42.11} \|u_{2m-1}\| \to \infty \quad  \text{as} \;\;  m \to
\infty, \; m \in \mathcal{M}_2.
\end{equation}
For potentials $v \in L^2$ this has been proven in \cite[Ine.
(3.29)]{KaMi01}.  Formula (\ref{42.11}) implies that $\{u_{2m-1}, \,
u_{2m}, \; m \in \mathcal{M}_2\}$ could not be a subset of a Riesz
basis.

However, if a potential $v $ is singular it may happen that
$\mathcal{M}_2$ is infinite but with the choices determined by
Option 2 we have
\begin{equation}
\label{42.12}  \exists C>0 \quad  0 < \frac{1}{C} \leq  \|u_{2m-1}\|
\leq C <\infty, \; \forall m \in \mathcal{M}_2.
\end{equation}

\begin{Example}
\label{eG} Take Gasymov type \cite{Ga80} singular potential
\begin{equation}
\label{42.13} v(x) = \sum_{k=1}^\infty  c(k) e^{2 i k x },
\end{equation}
with
\begin{equation}
\label{42.14} \exists A > 0: \quad
 1/A \leq |c(k)| \leq A  \quad \forall k \in \mathbb{N}.
\end{equation}
Then we have:

(i) $\mathcal{M}_2 = \Gamma_{bc}\cap \{n: n>N_*\} $  for  $bc =
Per^+$ and $Per^-,$ i. e., all  $E_m $ with $m>N_*$ are Jordan;

(ii)  with choices by Option 2 the condition (\ref{42.12}) holds,
and the system of eigen- and associated functions $\{u_{2m-1} ,
u_{2m}\}$ is a Riesz basis in $L^2$.

\end{Example}

This example is in a quite curious contrast with the case $v \in
L^2$ or $ v \in L^1$ -- see (\ref{42.11}) above. We prove the claims
(i) and (ii) in Section 6, where other examples of $H^{-1}_{per}$
potentials are considered as well.
\bigskip

4. Now we declare our canonical choice of vectors in Jordan blocks:

\begin{equation}
\label{500}  from \; now \; on \; \;our \; special \; system \;
\{u_j\} \;is\; chosen\; by \; Option \;1 \; .
\end{equation}

\begin{Remark}
\label{BR} The choice (\ref{500}) guarantees that the total system
$\{u_j\}$  of root functions has the Riesz basis property if and only
if its subsystem
\begin{equation}
\label{501} U_\mathcal{M} =\{u_{2m-1} , u_{2m}, \; m \in
\mathcal{M}\}
\end{equation}
is a Riesz basis in its closed linear span.
\end{Remark}

But still we need to define $u_j$ for small $j, \; |j| \leq N_*. $
 This system will be a basis in $E_* = Ran S_{N_*}.$
  Of course $\dim E_* < \infty,$ so this choice has
no bearing on whether the entire system will or will not be a Riesz
basis (or a basis) in $L^2$ or another function space. We want it to
be a system of root functions, so we choose the system of eigen- and
associated functions of a finite-dimensional operator $ S_* L_{bc}
S_*, \; S_* = S_{N_*}$ (We omit elementary linear algebra details.)
\bigskip

\section{$L^p$-spaces and other rearrangement invariant function
spaces}

1. In Sections 3 and 4 we discussed (criteria of) convergence of
decompositions
\begin{equation}
\label{5.1} S_{N^*} f + \sum_{n>N^*, n \in \Gamma_{bc}} P_n f = f
\quad \forall f \in L^2
\end{equation}
in $L^2.$ Convergence of such series or of eigenfunction
decompositions in $L^p, \, p \neq 2,$ or other {\em rearrangement
invariant function spaces} (see \cite{KPS82, MS96}) is not an
independent from convergence in $L^2$ question because of the
following two reasons of very general nature:

(A) In the case of free operator $L^0$ its decompositions
(\ref{5.1}) are standard (or slight variations of) Fourier series.
{\em These decompositions
\begin{equation}
\label{5.2} S^0_{N^*} f + \sum_{n>N^*, n \in \Gamma_{bc}} P^0_n f =
f \quad \forall f \in E
\end{equation}
converge in $E$ if $E$ is a separable rearrangement invariant
function space where Hilbert transform is bounded}. This is
certainly the case if
\begin{equation}
\label{5.3}
  L^a \supset E \supset L^b \quad \text{for some}\;\;
  a, b \;\; \text{with} \;\; 1 < a \leq b < \infty.
\end{equation}
See \cite[Theorem 2.7.2]{KPS82}, \cite{MS96,Zy90}, and more about
Boyd indices in \cite{LT-2}, Theorem 2.c.16 and Proposition 2.b.3
there.

(B) Put
\begin{equation}
\label{5.4} S_N = S_{N^*}  + \sum_{n>N^*, n \in \Gamma_{bc}}^N P_n.
\end{equation}
There are different versions of {\em equiconvergence} -- see the
survey paper of A. Minkin \cite{Min99}. For example, J. Tamarkin
\cite{Ta17, Ta28} and M. Stone \cite{St26} proved the following.
\begin{Lemma}
\label{Lem5.3} If $v \in L^1$ then for any $f \in L^1$
\begin{equation}
\label{5.5} \|(S_N - S_N^0)f\|_\infty \to 0.
\end{equation}
\end{Lemma}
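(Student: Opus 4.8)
The plan is to realize the difference $S_N - S_N^0$ as an integral operator and to prove that its kernel tends to zero uniformly on $I\times I$; since the operator norm from $L^1$ to $L^\infty$ of an integral operator equals the essential supremum of the modulus of its kernel, this yields (\ref{5.5}) for every $f\in L^1$ simultaneously. To produce the kernel I would use the contour representation of the spectral projections. Writing $R(z)=(z-L_{bc})^{-1}$ and $R^0(z)=(z-L^0_{bc})^{-1}$, the localization of Proposition \ref{prop2.1} supplies a contour $\Gamma_N$ (the boundary of a region of the type $\Pi_N$ enlarged so as to enclose exactly the eigenvalues up to level $N$) for which
\begin{equation}
S_N - S_N^0 = \frac{1}{2\pi i}\oint_{\Gamma_N}\bigl[R(z)-R^0(z)\bigr]\,dz .
\end{equation}

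First I would insert the resolvent identity $R-R^0 = R^0 V R^0 + R^0 V R^0 V R$, where $V$ denotes multiplication by $v$. This splits $S_N-S_N^0$ into a first-order term with kernel
\begin{equation}
K_N^{(1)}(x,t) = \frac{1}{2\pi i}\oint_{\Gamma_N}\int_0^\pi G^0(x,s;z)\,v(s)\,G^0(s,t;z)\,ds\,dz ,
\end{equation}
and a remainder assembled from $R^0 V R^0 V R$. Here $G^0(\cdot,\cdot;z)$ is the free Green's function, given in closed form through $\sqrt{z}$; on $\Gamma_N$ it obeys $|G^0(x,t;z)|\le C/\sqrt{|z|}$ with the accompanying oscillatory factors $e^{i\sqrt{z}\,(x-t)}$ bounded. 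For the remainder I expect a direct size estimate to suffice: on $\Gamma_N$ one has $\mathrm{length}(\Gamma_N)\sim N^2$, $|G^0|\lesssim 1/N$, and $\|R(z)\|$ is controlled by Proposition \ref{prop2.1}, so the extra factor of $v$ together with the improved decay of a product of three Green's functions drives this contribution to $0$ as $N\to\infty$, with $\|v\|_1$ entering only as a fixed constant.

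The delicate part is $K_N^{(1)}$, since the crude bound $\mathrm{length}(\Gamma_N)\cdot\sup|G^0|^2\cdot\|v\|_1\sim\|v\|_1$ is merely bounded, not small. To extract decay I would evaluate the inner $z$-integral by residues, obtaining a finite sum over the free eigenvalues enclosed by $\Gamma_N$ of products of free eigenfunctions paired against the Fourier data of $v$. This exhibits $K_N^{(1)}$ as a Dirichlet-kernel-type transform of $v$, whose sup-norm is governed by the tail of the Fourier coefficients of $v$. The main obstacle will therefore be the vanishing of $K_N^{(1)}$: the oscillatory factors $e^{i\sqrt{z}(x-s)}$ and $e^{i\sqrt{z}(s-t)}$ produce, after contour integration, quantities of the form $\int_0^\pi v(s)e^{ims}\,ds$ with $|m|\to\infty$, and the Riemann--Lebesgue lemma (valid precisely because $v\in L^1$) forces these to $0$ uniformly in $(x,t)$. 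Making this rigorous requires organizing the residue sum so that the $(x,t)$-dependence stays bounded while the $v$-dependent factors are exactly the decaying Fourier coefficients; once the oscillatory cancellation is isolated in this way, summation of the uniformly small contributions together with the already-controlled remainder completes the proof. This is the classical equiconvergence mechanism of Tamarkin and Stone, recast through the resolvent so as to apply in the present setting.
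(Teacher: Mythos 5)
The first thing to note is that the paper does not prove Lemma \ref{Lem5.3} at all: it is quoted as a classical equiconvergence theorem of Tamarkin \cite{Ta17,Ta28} and Stone \cite{St26}. So your proposal can only be compared with those classical arguments, and in outline it coincides with them: contour representation of $S_N-S_N^0$, resolvent expansion, smallness of the higher-order term, and Riemann--Lebesgue for the first-order term. The mechanism you describe is the right one and can be completed.

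There is, however, a concrete gap in what you wrote, located exactly where you declare the step easy. Your estimate of the remainder $R^0VR^0VR$ rests on the claim that ``$\|R(z)\|$ is controlled by Proposition \ref{prop2.1}.'' That proposition asserts only where the spectrum lies; it contains no bound on the resolvent along $\Gamma_N$. Worse, an $L^2$ operator-norm bound would not suffice anyway: to bound the \emph{kernel} of $R^0VR^0VR$ in sup norm you need a pointwise ($L^1\to L^\infty$) bound on the kernel of $R(z)$, and since multiplication by $v\in L^1$ is not even a bounded operator on $L^2$, the composition cannot be normed factor-by-factor in $L^2$ at all. The missing ingredient -- which is in fact the core of the Tamarkin--Stone proof -- is the construction and estimation of the perturbed Green's function on the expanding contours: solve the kernel integral equation $G(x,t;z)=G^0(x,t;z)+\int_0^\pi G^0(x,s;z)v(s)G(s,t;z)\,ds$ by Neumann series, which converges on $\partial\Pi_N$ for large $N$ precisely because $\|v\|_1\sup_{\partial\Pi_N}|G^0|=O(\|v\|_1/N)<1/2$, and yields $\sup|G|\le 2\sup|G^0|\le C/N$. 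Only with that bound does your counting (length $\sim N^2$ times $(C/N)^3$ times $\|v\|_1^2$) close the remainder. The first-order term is likewise left as a program: carrying out the residue computation gives a double sum over free eigenvalues $\lambda_k^0$ inside and $\lambda_l^0$ outside the contour, with entries of the form (Fourier coefficient of $v$)$/(\lambda_k^0-\lambda_l^0)$, and uniform smallness follows by splitting at a Riemann--Lebesgue threshold $M$: terms with large index difference are small because the Fourier coefficients are, while $\sum_{k\le N<l}|\lambda_l^0-\lambda_k^0|^{-1}$ stays bounded uniformly in $N$; terms with index difference at most $M$ number $O(M^2)$ and have denominators of size at least $2N-M$, hence contribute $O(M^2\|v\|_1/N)$. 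These computations are exactly what the cited papers contain; without them (or without simply citing those papers, as the authors do) your proposal remains a sketch rather than a proof.
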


This lemma helps to cover the case of Hill operator with $v \in
L^1.$  For $v \in H^{-1}_{per}$  see Proposition \ref{Prop5.5}
below.

Equiconvergence in the case of Dirac operator with potentials $v \in
L^c, \; c >4/3, $  is proven in \cite[Theorem 6.2 (a)]{Mi04}. As a
corollary it is noticed there \cite[Theorem 6.4, (6.105)]{Mi04} that
the series (\ref{5.6}) converges in $L^p (I, \mathbb{C}^2) \; 1 < p
<\infty. $
\bigskip

2. Now we can combine (A) and (B) to conclude the following.
\begin{Proposition}
\label{Prop5.3} If $v \in L^2$ and (\ref{5.3}) holds then
\begin{equation}
\label{5.6}
 S_{N^*} f + \sum_{n>N_*, n \in \Gamma_{bc}} P_n f = f
\quad \forall f \in E
\end{equation}
\end{Proposition}

\begin{proof}
Indeed
\begin{equation}
\label{5.7} S_N f = S_N^0 f +(S_N - S_N^0)f
\end{equation}
but with (\ref{5.3}) $\|g\|_E \leq \|g\|_\infty$ so for $f \in L^1$
\begin{equation}
\label{5.8} \|(S_N - S_N^0)f\|_E \leq \|(S_N - S_N^0)f\|_\infty \to
0.
\end{equation}
Now (\ref{5.2}) and (\ref{5.8}) together imply (\ref{5.6}).

\end{proof}
\bigskip

3. Of course in the case of Hill operators we want to cover
potentials $v \in H^{-1}_{per}$ as well. This is possible  because
the following {\em equiconvergence} statement is true.
\begin{Proposition}
\label{Prop5.5} Let $v \in H^{-1}_{per}$,  $W$ be coming from
(\ref{22.54}) and (\ref{22.55}), and
\begin{equation}
\label{5.9}
  1 < a \leq b < \infty \quad \text{with} \quad \delta = 1/2 - (1/a - 1/b) > 0.
\end{equation}
Then for any $N > N_* (v)$
\begin{equation}
\label{5.10} \|S_N - S_N^0: \; L^a \to L^b\| \leq C(\delta) \left [
N^{-\tau} + \mathcal{E}_N (W) \right ],
\end{equation}
where
\begin{equation}
\label{5.11}
  \tau =\begin{cases}
\delta &  \text{if} \;\; 1<a \leq 2 \leq b <\infty;\\
1-1/a &  \text{if} \;\; 1<a \leq  b \leq 2;\\
1/b  &  \text{if} \;\; 2\leq a  \leq b <\infty.
  \end{cases}
\end{equation}
and
\begin{equation}
\label{5.12} \mathcal{E}_N (W) = \left ( \sum_{|m|\geq N} |W(m)|^2
\right )^{1/2}.
\end{equation}

\end{Proposition}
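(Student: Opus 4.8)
The plan is to write $S_N-S_N^0$ as a single Cauchy integral and reduce everything to one explicitly computable operator. Choose a contour $\Gamma_N=\partial\Pi_N$ (the box of Proposition~\ref{prop2.1} with $N$ in place of $N_*$), which stays at distance of order $N$ from the spectra of both $L_{bc}$ and $L^0_{bc}$ and encloses exactly the eigenvalues near $n^2$, $n\le N$. Then $S_N=\frac{1}{2\pi i}\int_{\Gamma_N}R(z)\,dz$ and $S_N^0=\frac{1}{2\pi i}\int_{\Gamma_N}R^0(z)\,dz$ with $R(z)=(z-L_{bc})^{-1}$, $R^0(z)=(z-L^0_{bc})^{-1}$, so that
\[
S_N-S_N^0=\frac{1}{2\pi i}\int_{\Gamma_N}\bigl(R(z)-R^0(z)\bigr)\,dz .
\]
Inserting the Neumann expansion $R(z)-R^0(z)=\sum_{j\ge 1}R^0(z)\bigl(VR^0(z)\bigr)^j$, where $V$ is multiplication by the singular potential $v=w'$ understood through the $H^{-1}$ pairing as in \cite{DM21}, splits the difference into a first-order term and a remainder; the first task is to show that the first-order term carries the whole estimate.

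The second step evaluates $T_1=\frac{1}{2\pi i}\int_{\Gamma_N}R^0(z)VR^0(z)\,dz$ in the exponential basis, where $R^0(z)$ is diagonal with entries $1/(z-k^2)$ and $V$ has matrix $V(m-k)=i(m-k)W(m-k)$ by (\ref{22.55}). A residue computation shows the $(m,k)$ entry of $T_1$ vanishes unless exactly one of $m^2,k^2$ lies inside $\Gamma_N$, in which case it equals $\pm V(m-k)/(m^2-k^2)$. The decisive cancellation
\[
\frac{V(m-k)}{m^2-k^2}=\frac{i(m-k)W(m-k)}{(m-k)(m+k)}=\frac{iW(m-k)}{m+k}
\]
removes the singular factor $(m-k)$ and exhibits $T_1$ as an off-diagonal (``cross'') operator supported on $\{|m|\le N<|k|\}\cup\{|k|\le N<|m|\}$, carrying the smoothing weight $1/(m+k)$ and the $\ell^2$ sequence $W$. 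This is precisely why the tail in (\ref{5.12}) is measured by $\mathcal{E}_N(W)$ rather than by the Fourier coefficients of $v$.

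The third and decisive step bounds $\|T_1:L^a\to L^b\|$ by $C(\delta)\,[N^{-\tau}+\mathcal{E}_N(W)]$. I would split the cross region according to whether $|m-k|\ge N$ or $|m-k|<N$. On the far part $|m-k|\ge N$ the coefficients $W(m-k)$ are exactly the $\ell^2$-tail of $W$, and a Young/Hausdorff--Young convolution estimate --- legitimate since $1<a\le b<\infty$, so the free decomposition and the associated Fourier multipliers are bounded on $L^a$ and $L^b$ (point (A) above) --- converts the bound $\mathcal{E}_N(W)$ into an operator bound. On the near part $|m-k|<N$ one has $|m+k|\gtrsim N$ throughout the cross region, so the weight $1/(m+k)$ yields an algebraic gain; interpolating the resulting multiplier bounds between the $L^2$-based endpoint mappings produces the three-case exponent $\tau$ of (\ref{5.11}): the mixed value $\delta=1/2-(1/a-1/b)$ for $1<a\le 2\le b$ and the one-sided values $1-1/a$ and $1/b$ in the ranges below and above $2$.

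Finally, the remainder $\sum_{j\ge 2}\frac{1}{2\pi i}\int_{\Gamma_N}R^0(z)(VR^0(z))^j\,dz$ is controlled by the uniform smallness of $VR^0(z)$ on $\Gamma_N$ furnished by the resolvent estimates underlying Propositions~\ref{prop2.1} and \ref{bprop1} (cf.\ \cite[Proposition~15]{DM21}): the geometric series converges and each higher term is dominated by the bound already obtained for $T_1$. The main obstacle is the third step: extracting the sharp, case-dependent exponent $\tau$ requires an interpolation that tracks the weight $1/(m+k)$ simultaneously with the mapping properties of the free decomposition, and it is here that the hypothesis $\delta>0$, i.e.\ $1/a-1/b<1/2$, is needed to keep the $L^a\to L^b$ norm finite.
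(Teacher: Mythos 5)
First, note that the paper itself contains no proof of Proposition~\ref{Prop5.5}: it states only that ``Proof with all details is to be given in another paper we will submit shortly.'' So your attempt cannot be matched against an argument in this text; it has to stand on its own. Its skeleton is sensible --- the contour representation of $S_N-S_N^0$ over $\partial\Pi_N$, the residue computation showing the first-order term lives on the cross region $\{|m|\le N<|k|\}\cup\{|k|\le N<|m|\}$, the cancellation $V(m-k)/(m^2-k^2)=iW(m-k)/(m+k)$ explaining why the tail $\mathcal{E}_N(W)$ of $W$ (not of $V$) appears, and the correct observation that $|m-k|<N$ forces $|m+k|>N$ on the cross --- but two essential steps are not proofs, and one of them would fail as written.

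The step that fails: for $v\in H^{-1}_{per}\setminus L^2$ the operator $VR^0(z)$ is not ``uniformly small on $\Gamma_N$''; it is not even bounded on $L^2$. Indeed $VR^0(z)e^{ikx}=(z-k^2)^{-1}\,v\,e^{ikx}$, and $v\,e^{ikx}\notin L^2$ since $\sum_m |V(m)|^2=\sum_m m^2|W(m)|^2$ may diverge. So the plain Neumann expansion $R-R^0=\sum_{j\ge1}R^0(VR^0)^j$ together with ``the geometric series converges'' has no content here; one needs the symmetrized expansion through $K_z=(z-L^0_{bc})^{-1/2}$, writing $R-R^0=\sum_{j\ge1}K_z(K_zVK_z)^jK_z$, where $K_zVK_z$ has matrix $V(m-k)\,(z-m^2)^{-1/2}(z-k^2)^{-1/2}$ and \emph{is} small in Hilbert--Schmidt-type norms on the contour (this is precisely the device of \cite{DM16,DM21}). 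Moreover, the higher-order terms do not inherit ``the bound already obtained for $T_1$'': they are not of displacement (convolution) form, enjoy no analogous cancellation, and must be estimated separately via the mapping norms $\|K_z:L^a\to L^2\|$, $\|K_z:L^2\to L^b\|$ --- which is exactly where the $N$-powers come from. The second gap is the one you yourself flag: the ``decisive third step'' producing the three-case exponent $\tau$ in (\ref{5.11}) is only asserted. Naive Young/Hausdorff--Young does not deliver it; e.g.\ for $1<a<2<b$ a matrix of the form $W(m-k)/(m+k)$ on the far cross region cannot be pushed from $\ell^{a'}$ into $\ell^{b'}$ by Young's inequality (the exponents run the wrong way, since $\ell^{b'}\subset\ell^{a'}$), so getting $C\,\mathcal{E}_N(W)$ as an $L^a\to L^b$ bound already requires the same $K_z$-factorization or a weighted Schur/duality argument. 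Since both the remainder control and the sharp $L^a\to L^b$ estimate are missing, what you have is a plausible outline of a perturbative proof, not a proof.
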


Proof with all details is to be given in another paper we will
submit shortly.

\begin{Proposition}
\label{Prop5.6} If $v \in H^{-1}_{per}$ and $E$ is a s.r.i.f.s. such
that (\ref{5.3}) and (\ref{5.9}) hold then (\ref{5.6}) hold.
\end{Proposition}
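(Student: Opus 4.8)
The plan is to mirror the proof of Proposition~\ref{Prop5.3}, replacing the $L^1\to L^\infty$ equiconvergence of Lemma~\ref{Lem5.3} by the quantitative $L^a\to L^b$ equiconvergence of Proposition~\ref{Prop5.5}. Fix a pair $a,b$ for which both (\ref{5.3}) and (\ref{5.9}) hold, and for $f\in E$ split the partial sums as in (\ref{5.7}):
\begin{equation*}
S_N f - f = (S_N^0 f - f) + (S_N - S_N^0) f .
\end{equation*}
It then suffices to show that each of the two summands tends to $0$ in the norm of $E$.

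For the first summand I would invoke ingredient (A): since (\ref{5.3}) squeezes $E$ between $L^a$ and $L^b$ with $1<a\le b<\infty$, the Hilbert transform is bounded on $E$, and hence the free decomposition (\ref{5.2}) converges, i.e.\ $\|S_N^0 f - f\|_E \to 0$ for every $f\in E$.

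For the second summand I would use ingredient (B), namely Proposition~\ref{Prop5.5}, together with the two continuous embeddings encoded in (\ref{5.3}): from $E\subset L^a$ one gets $\|f\|_a \le C_1\|f\|_E$, and from $L^b\subset E$ one gets $\|g\|_E \le C_2\|g\|_b$. Applying these to $g_N=(S_N-S_N^0)f$ and chaining with the operator bound yields
\begin{equation*}
\|(S_N-S_N^0)f\|_E \le C_2\,\|(S_N-S_N^0)f\|_b \le C_2\,\|S_N-S_N^0:L^a\to L^b\|\,\|f\|_a \le C_1 C_2\,\|S_N-S_N^0:L^a\to L^b\|\,\|f\|_E .
\end{equation*}
By Proposition~\ref{Prop5.5} the operator norm is at most $C(\delta)[N^{-\tau}+\mathcal{E}_N(W)]$. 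In each of the three cases of (\ref{5.11}) one has $\tau>0$ (namely $\tau=\delta>0$ by (\ref{5.9}), or $\tau=1-1/a>0$ since $a>1$, or $\tau=1/b>0$ since $b<\infty$), so $N^{-\tau}\to 0$; and $\mathcal{E}_N(W)\to 0$ because $w$ is $\pi$-periodic and locally $L^2$, whence its coefficient sequence $(W(m))$ lies in $\ell^2$ and the tail in (\ref{5.12}) vanishes. Thus $\|(S_N-S_N^0)f\|_E\to 0$, and combining with the first summand gives (\ref{5.6}).

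Since the genuinely analytic content is packaged into Proposition~\ref{Prop5.5} (whose proof is deferred to a separate paper), within the present argument the only point demanding care is the direction of the embeddings in (\ref{5.3}): one must feed $f\in E\subset L^a$ into the domain side of the $L^a\to L^b$ bound and read the output off on the $L^b\subset E$ side, so that the vanishing operator norm is amplified only by the fixed constants $C_1,C_2$ and by $\|f\|_E$. I expect this bookkeeping, rather than any deep estimate, to be the main (and rather mild) obstacle here.
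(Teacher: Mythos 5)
Your proof is correct and follows essentially the same route as the paper: split via (\ref{5.7}), dispose of the free part by ingredient (A), and bound $(S_N-S_N^0)f$ by chaining the embeddings $E\subset L^a$ and $L^b\subset E$ with the operator norm estimate of Proposition~\ref{Prop5.5}. Your bookkeeping is in fact slightly more careful than the paper's display (\ref{5.13}), which misprints $\|f\|_{L^b}$ where $\|f\|_{L^a}$ is meant, and you make explicit the two facts the paper leaves implicit, namely that $\tau>0$ in all three cases of (\ref{5.11}) and that $\mathcal{E}_N(W)\to 0$ because $(W(m))\in\ell^2$.
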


\begin{proof}
Now with $\|g\|_a \leq \|g\|_E  \leq \|g\|_b$ (\ref{5.10}) and
(\ref{5.7}) imply
\begin{equation}
\label{5.13} \|(S_N - S_N^0)f\|_E \leq \|(S_N - S_N^0)f\|_{L^b} \leq
\|S_N - S_N^0: \; L^a \to L^b \| \cdot \|f\|_{L^b} \leq \varepsilon
(N) \|f\|_E,
\end{equation}
where $$ \varepsilon (N) = C(\delta) \left [ N^{-\delta} +
\mathcal{E}_N (w) \right ] \to 0, $$ so (\ref{5.6}) holds.
\end{proof}
\bigskip

4. Terms $P_m f$ in (\ref{5.6}) are vectors in two-dimensional
subspaces
\begin{equation}
\label{5.14} E_m = Lin\,Span \{u_{2m-1}, u_{2m}\},
\end{equation}
with $\{u_j\}$ defined in Section 4.2, (\ref{500}).

{\bf Fact (C)}. In these 2D subspaces $L^1$ norms and $L^\infty$
norms are {\em uniformly equivalent,} i.e., with $B = B(v) < \infty$
\begin{equation}
\label{5.15} \|F\|_\infty \leq B \|F\|_1 \quad \text{if} \quad F \in
E_m, \; m \geq N (v)
\end{equation}
This is proven in \cite[Theorem 8.4, p.185]{Mi04} for Dirac
operators with $V \in L^p, \, 1 < p,$ and in \cite[Theorem 51,
p.159]{DM21} for Hill operators with $v \in H^{-1}_{per}.$

Section 4.2 explains that with conditions (\ref{3.5}) and
(\ref{3.5a})
$$
\|\psi_j|E_m\| \leq \|\psi_j|E\| \leq C \|\psi_j|E_m\|.
$$
-- see (\ref{3.17}) - (\ref{3.19a}).  By Lemma \ref{lem3.1}, the
system $\{u_j\}$ is a basis in $Y \subset E$ if and only if
\begin{equation}
\label{5.16} \sup_j \|u_j\|_E \cdot \|\psi_j\|_E <\infty.
\end{equation}
But Fact (C) shows that (\ref{5.16}) holds -- or does not hold --
for all s.r.i.f.s. $E$ such that
\begin{equation}
\label{5.17} L^1([0,\pi]) \supset E \supset L^\infty ([0,\pi])
\end{equation}
simultaneously. Any condition which is good to guarantee basisness
in one $E$ is automatically good for all $E's$. Therefore, we can
immediately to claim the following.

\begin{Theorem}
\label{thm5.8} Let $E$ be a separable r.i.f.s. and
\begin{equation}
\label{5.18} L^a ([0,\pi]) \supset E \supset L^b ([0,\pi]), \quad 1<
a \leq b <\infty.
\end{equation}
The system $\{u_j\}$ defined in (\ref{500}) is a basis in $E$ (or
$E^2$) if and only if $\{u_j\}$ is a basis in $ L^2 ([0,\pi])$ ( or
$ (L^2 ([0,\pi]))^2.$
\end{Theorem}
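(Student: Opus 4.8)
$E$ **to the already-established Hilbert-space case** $E = L^2$ **by exploiting Fact (C), the uniform equivalence of** $L^1$ **and** $L^\infty$ **norms on the two-dimensional blocks** $E_m$. The key observation is that Lemma \ref{lem3.1} characterizes the basis property purely through the uniform boundedness of the one-dimensional projection norms, namely through the quantity $\sup_j \|u_j\|_E \cdot \|\psi_j\|_E$ in (\ref{5.16}). Since this is a \emph{block-diagonal} condition—each $q_j$ lives inside a single $E_m$—the whole question localizes to the finite-dimensional subspaces $E_m$, where all the r.i.f.s.\ norms are comparable.

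First I would record that the block decomposition (\ref{5.6}) converges in $E$: this is exactly what Propositions \ref{Prop5.3} and \ref{Prop5.6} provide under the hypothesis (\ref{5.18}) (using $1<a\le b<\infty$ and, for the singular Hill case, the equiconvergence estimate (\ref{5.10})), so that (\ref{3.5}) and (\ref{3.5a}) hold with $Q_m = P_m$. This means the inclusion $L^a \supset E \supset L^b$ forces convergence of the $2$D-block series in $E$ for \emph{both} the perturbed and the free operator, which is the structural prerequisite for invoking Lemma \ref{lem3.1} and the chain (\ref{3.17})--(\ref{3.19a}).

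Next I would run the equivalence argument. By Lemma \ref{lem3.1}, $\{u_j\}$ is a basis in its closed span $Y \subset E$ if and only if (\ref{5.16}) holds. Fact (C), inequality (\ref{5.15}), says that on each block $E_m$ (for $m \ge N(v)$) the $L^1$ and $L^\infty$ norms—and hence, by the r.i.f.s.\ squeezing (\ref{5.17}), \emph{all} intermediate r.i.f.s.\ norms—are uniformly equivalent. Therefore $\|u_j\|_E \asymp \|u_j\|_{L^2}$ and $\|\psi_j|E_m\|_E \asymp \|\psi_j|E_m\|_{L^2}$ uniformly in $j$, and by (\ref{3.19a}) the restricted functional norm controls the full functional norm up to the fixed Riesz constant $C$. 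Combining these comparabilities, the supremum in (\ref{5.16}) computed in $E$ is finite if and only if the corresponding supremum computed in $L^2$ is finite. Since the finitely many small-index vectors $u_j$, $|j| \le N_*$, span a fixed finite-dimensional subspace and contribute only boundedly, they do not affect the supremum, so the equivalence is unaffected by the choice made in Section 4.4.

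\textbf{The main obstacle is bookkeeping rather than analysis:} one must verify that the constants hidden in Fact (C) are genuinely \emph{uniform} in $m$ (which (\ref{5.15}) asserts) and that the Riesz-system property of $\{P_n\}$—guaranteed by Propositions \ref{prop2.1} and \ref{prop2.2} in $L^2$—transfers to give the needed two-sided control (\ref{3.19a}) of $\|\psi_j\|_E$ by $\|\psi_j|E_m\|_E$ in the non-Hilbertian space $E$. The delicate point is that (\ref{3.17}) was derived assuming $\sup_m \|Q_m\|_E = C < \infty$; this must be re-checked in $E$, and it follows precisely because the $E$-convergence of (\ref{5.6}) established in the first step forces uniform boundedness of the partial-sum block projections by the uniform boundedness principle. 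Once that uniform $E$-boundedness of $\{S_{N^*}\} \cup \{P_n\}$ is in hand, everything else is a direct transcription of the $L^2$ argument with $\asymp$ replacing $=$, and the biconditional in the statement follows. The statement for the vector-valued spaces $E^2$ (the Dirac case) is identical, since Fact (C) holds there as well by \cite[Theorem 8.4]{Mi04}.
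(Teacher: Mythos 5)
Your proposal is correct and follows essentially the same route as the paper: establish convergence of the block decomposition (\ref{5.6}) in $E$ via the equiconvergence results (Propositions \ref{Prop5.3} and \ref{Prop5.6}), then invoke Lemma \ref{lem3.1} together with Fact (C) (uniform $L^1$--$L^\infty$ equivalence on the blocks $E_m$) and the two-sided comparison (\ref{3.19a}) to conclude that the basis criterion (\ref{5.16}) holds in $E$ if and only if it holds in $L^2$. Your explicit remark that (\ref{3.5a}) in $E$ follows from $E$-convergence by the uniform boundedness principle is exactly the mechanism the paper uses (implicitly, via Section 3) as well.
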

\bigskip

\section{Criteria in terms of Fourier
coefficients of potentials}

1. Let $L = L_{Per^\pm} (v) $ be a Hill operator with
$H^{-1}_{per}$-potential, or Dirac operator with $L^2$-potential,
subject to periodic $Per^+$ or anti-periodic $Per^+$ boundary
conditions.

Recall that the eigenvalues $ \lambda_n^\pm, \, \mu_n $ and the
related functions $\beta^\pm_n (v,z)$ are well defined for large
enough $|n|.$   Let
\begin{equation}
\label{t1} t_n (z) = \begin{cases}  |\beta^-_n (z)/\beta^+_n (z)|  &
\text{if} \quad \beta^+_n (z) \neq 0,\\
\infty &  \text{if} \quad \beta^+_n (z) = 0, \;\beta^-_n (z) \neq
0,\\
1 &  \text{if} \quad \beta^+_n (z) = 0, \;\beta^-_n (z)=0.
\end{cases} \qquad |n| > N_*.
\end{equation}
Then the following criterion for existence of a Riesz basis
consisting of root functions of $L$ holds.

\begin{Theorem}
\label{crit} Let $\mathcal{M} =\{n: \; |n| \geq N_*, \; \lambda^-_n
\neq \lambda^+_n \},$ and  let $\{u_{2n-1}, \,u_{2n} \}$ be a pair
of normalized eigenfunctions corresponding to the eigenvalues
$\lambda_n^-, \, \lambda_n^+.$

(a) The system $\{u_{2n-1}, \,u_{2n},  \; n\in \mathcal{M}\} $ is a
Riesz basis in its closed linear span if and only if
\begin{equation}
\label{cr11} 0< \liminf_{n\in \mathcal{M}} t_n (z_n^*), \quad
\limsup_{n\in \mathcal{M}} t_n (z_n^*)  < \infty,
\end{equation}
where $z_n^* = \frac{1}{2} (\lambda^-_n + \lambda^+_n) -\lambda^0_n
$ with $\lambda^0_n = n^2 $ for Hill operators and $\lambda^0_n = n
$ for Dirac operators.

(b) The system of root functions of $L$ contains a Riesz basis if
and only if (\ref{cr11}) holds.
\end{Theorem}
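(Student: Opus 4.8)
The plan is to deduce Theorem~\ref{crit} from the geometric Criterion~\ref{crit6}, using the Lyapunov--Schmidt reduction of Lemma~\ref{lem1} to compute the angle between the two eigenfunctions inside each invariant block $E_n$ explicitly in terms of $\beta_n^\pm$. First I would record that, by the facts collected in Section~4 (inequalities (\ref{2.1})--(\ref{2.4}) together with the Bari--Markus argument), the system of Riesz projections $\{P_n : n\in\mathcal M\}$ is a Riesz system of projections, in all four cases ($Per^\pm$, Hill and Dirac); this is precisely the standing hypothesis of Criterion~\ref{crit6}. Applying that criterion to the normalized pairs $\{u_{2n-1},u_{2n}\}_{n\in\mathcal M}$ reduces part~(a) to showing that
\[
\kappa=\sup_{n\in\mathcal M}\bigl(1-|\langle u_{2n-1},u_{2n}\rangle|^2\bigr)^{-1/2}<\infty
\]
is equivalent to (\ref{cr11}), so the whole of (a) comes down to computing the Gram angle $|\langle u_{2n-1},u_{2n}\rangle|$.

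To compute it, note that by Lemma~\ref{lem1} the eigenfunctions $u_{2n},u_{2n-1}$ for $\lambda_n^+,\lambda_n^-$ have $E_n^0$-components equal to the eigenvectors of the matrix (\ref{p1}) at the respective eigenvalue. Choosing consistent square roots, these are $\xi_\pm=(\sqrt{\beta_n^-},\pm\sqrt{\beta_n^+})$ in the orthogonal basis of $E_n^0$, whence
\[
\frac{\langle\xi_-,\xi_+\rangle}{\|\xi_-\|\,\|\xi_+\|}=\frac{|\beta_n^-|-|\beta_n^+|}{|\beta_n^-|+|\beta_n^+|}.
\]
Since $u_{2n}-P_n^0u_{2n}=(I-P_n^0)u_{2n}\perp E_n^0$ has norm bounded by $\|P_n-P_n^0\|_2\to0$ by (\ref{2.4}), the tails outside $E_n^0$ contribute only $o(1)$ to the inner product, and I obtain $|\langle u_{2n-1},u_{2n}\rangle|=\frac{|t_n-1|}{t_n+1}+o(1)$ with $t_n=t_n(z_n^*)$. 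A direct computation then gives $\bigl(1-|\langle u_{2n-1},u_{2n}\rangle|^2\bigr)^{-1/2}=\tfrac12(\sqrt{t_n}+1/\sqrt{t_n})+o(1)$, and since $\tfrac12(\sqrt t+1/\sqrt t)$ is bounded on a set of indices exactly when $t$ stays bounded away from $0$ and $\infty$, the condition $\kappa<\infty$ is equivalent to (\ref{cr11}). (On $\mathcal M$ one has $\lambda_n^-\neq\lambda_n^+$, so $\beta_n^+\beta_n^-\neq0$ for large $n$ and $t_n(z_n^*)\in(0,\infty)$ is genuinely defined.)

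The delicate point, which I expect to be the main obstacle, is that $\xi_+$ and $\xi_-$ are naturally evaluated at the two distinct eigenvalue positions $z_n^\pm=\lambda_n^\pm-\lambda_n^0$, whereas (\ref{cr11}) refers to $t_n$ at the midpoint $z_n^*$. To replace all arguments by $z_n^*$ without affecting boundedness of the ratio, I would combine the Lipschitz bounds (\ref{1.137}), (\ref{1.37}) on $\partial\beta_n^\pm/\partial z$ with the fundamental inequalities relating $\gamma_n=|\lambda_n^+-\lambda_n^-|$ to $|\beta_n^\pm|$ from \cite{DM15,DM21,DM20}; these force $\beta_n^\pm$ to vary by a negligible relative amount over the disc of radius $\sim\gamma_n$ containing $z_n^\pm$ and $z_n^*$, so that $t_n(z_n^+)$, $t_n(z_n^-)$, $t_n(z_n^*)$ are mutually comparable, and this uniform control near the degenerate regime $t_n\to0,\infty$ is where the real work lies. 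Finally, part~(b) is immediate from~(a): by the canonical choice (\ref{500}) and Remark~\ref{BR} the full root system is a Riesz basis iff the subsystem $U_\mathcal M$ of (\ref{501}) is a Riesz basis in its closed span, because the blocks indexed by $\mathcal M_1\cup\mathcal M_2$ carry orthonormal pairs (Option~1) and the finitely many low-index vectors span a finite-dimensional space.
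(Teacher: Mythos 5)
Your skeleton coincides with the paper's own route: part (b) is reduced to part (a) via Remark \ref{BR}; part (a) is handled by Criterion \ref{crit6} applied to the Riesz system of projections $\{P_n\}_{n\in\mathcal M}$, together with the computation of the Gram quantity through the null vectors $(\sqrt{\beta_n^-},\pm\sqrt{\beta_n^+})$ of the matrix (\ref{p1}) (the paper outsources that computation to \cite[Theorem 71]{DM15} and \cite[Theorem 12]{DM26}, and proves anew only one thing: the transfer between evaluation points, its Lemma \ref{lem7.1}). The genuine gap is that this transfer is exactly the step you postpone, and the intermediate statement you propose to prove in its place is false. The Lipschitz bounds (\ref{1.137}), (\ref{1.37}) together with $|\gamma_n|\le 2\bigl(|\beta_n^-(z_n^*)|+|\beta_n^+(z_n^*)|\bigr)$ control the variation of \emph{both} $\beta_n^\pm$ along $[z_n^-,z_n^+]$ only by $\varepsilon_n|\gamma_n|\lesssim\varepsilon_n|\beta_n^+(z_n^*)|$ (say in the regime $t_n\le 1$). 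For $\beta_n^+$ this is indeed a negligible \emph{relative} change, but for $\beta_n^-$ it need not be: nothing in these bounds prevents $|\beta_n^-(z_n^*)|\sim\varepsilon_n^2|\beta_n^+(z_n^*)|$ while $|\beta_n^-(z_n^\pm)|\sim\varepsilon_n|\beta_n^+(z_n^*)|$, in which case $t_n(z_n^\pm)/t_n(z_n^*)\sim 1/\varepsilon_n\to\infty$ and "mutual comparability" fails --- precisely in the degenerate regime $t_n\to 0$ which drives the necessity direction. What is true, and what suffices, is the one-sided propagation the paper proves in Lemma \ref{lem7.1}: if $t_{n_k}(z_{n_k}^*)\to 0$, then
\[
\sup_{z\in[z_{n_k}^-,z_{n_k}^+]} t_{n_k}(z)\;\le\;\frac{t_{n_k}(z_{n_k}^*)+4\varepsilon_{n_k}}{1-4\varepsilon_{n_k}}\;\longrightarrow\;0
\]
(and symmetrically with $\beta_n^+,\beta_n^-$ exchanged when $t_{n_k}\to\infty$). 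With this weaker statement replacing comparability, your collinearity argument at the true evaluation points $z_n^\pm$ goes through; in the non-degenerate regime needed for sufficiency your relative-variation reasoning is valid as stated.

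A second, smaller inaccuracy: the identity $\bigl(1-|\langle u_{2n-1},u_{2n}\rangle|^2\bigr)^{-1/2}=\tfrac12\bigl(\sqrt{t_n}+1/\sqrt{t_n}\bigr)+o(1)$ is wrong as written, because the $o(1)$ error coming from the tails $(P_n-P_n^0)u_j$ in (\ref{2.4}) enters additively \emph{inside} $1-|\langle u_{2n-1},u_{2n}\rangle|^2$; when $t_n\to0$ the main term $4t_n/(1+t_n)^2$ can be swamped by that error, and inverting the square root does not preserve an additive $o(1)$. This does not sink the proof --- the two implications you actually use (if $t_n(z_n^*)$ stays in $[1/C,C]$ then $\kappa<\infty$; if $\kappa<\infty$ then $|\langle u_{2n-1},u_{2n}\rangle|\le c<1$, hence $t_n$ is bounded away from $0$ and $\infty$) both survive --- but the equivalence should be run through these inequalities rather than through that asymptotic identity.
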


This theorem implies that Condition (7) in Theorem \ref{thmM} is
equivalent to Conditions (1) - (6) there.

\begin{proof}
In view of  Remark \ref{BR} we need to prove only (a).

For Dirac operators, \cite[Theorem 12]{DM26} proves, in the case $
\mathbb{N} \setminus \mathcal{M} $ is finite, that Condition
(\ref{cr11}) implies the existence of a Riesz basis in $L^2
([0,\pi],\mathbb{C}^2)$ which consists of eigenfunctions and at most
finitely many associated functions of the operator $L_{Per^\pm}(v).$
The same proof explains that (\ref{cr11}) implies (a) for arbitrary
infinite set of indices $\mathcal{M}$ not only for Dirac operators
but also for Hill operators with $H^{-1}_{per}$-potentials.

If (\ref{cr11}) fails, then one may follow, with a slight
modification, the proof of \cite[Theorem 71]{DM15} in order to show
that (a) fails. We provide all details of such a modification below.

Suppose (\ref{cr11}) fails. Then there is a subsequence of indices
$(n_k) $ in $\mathcal {M}$ such that either
\begin{equation}
\label{7.21} t_{n_k} (z_{n_k}^*) \to 0 \quad \text{as} \;\; k \to 0,
\end{equation}
or $t_{n_k} (z_{n_k}^*) \to \infty.$ Next we consider only the case
(\ref{7.21}) because the other one is symmetric -- if $1/t_{n_k}
(z_n^*) \to 0, $ then one may exchange the roles of $\beta_n^+$ and
$\beta_n^-$ and use the same argument.

\begin{Lemma}
\label{lem7.1} In the above notations, if (\ref{7.21}) holds then
there is a sequence $(\eta_k)$ of positive numbers such that
\begin{equation}
\label{7.22} t_{n_k} (z) \leq  \eta_k \to 0 \quad  \forall \, z \in
[z_{n_k}^-,z_{n_k}^+],
\end{equation}
where $[z_n^-,z_n^+]$ denotes the segment with end points $z_n^- $
and $z_n^+.$
\end{Lemma}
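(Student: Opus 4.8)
The plan is to leverage the analytic control over the functionals $\beta^\pm_n(z)$ provided by Proposition \ref{bprop1}(a) and Proposition \ref{bprop}(a), specifically the derivative bounds (\ref{1.137}) and (\ref{1.37}), to propagate the pointwise smallness of $t_{n_k}$ at the single point $z^*_{n_k}$ to uniform smallness across the whole segment $[z^-_{n_k}, z^+_{n_k}]$. First I would recall that the endpoints $z^\pm_n = \lambda^\pm_n - \lambda^0_n$ are precisely the two roots of the basic equation (\ref{be1}) (resp. (\ref{be})) guaranteed by part (c), that they lie in the relevant disc $|z| < n/4$ (resp. $|z| < 1/4$), and that $z^*_n = \frac{1}{2}(z^+_n + z^-_n)$ is their midpoint; so the segment is short, with length $|z^+_n - z^-_n| = |\gamma_n|$ comparable to $|\beta^+_n| + |\beta^-_n|$ via the fundamental inequalities. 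The key quantitative input is that along this segment the functions $\beta^\pm_n$ barely change relative to their size.

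The main estimate I would establish is that for $z, z'$ in the disc $|z| \le n/4$ (resp.\ $|z|\le 1/4$),
\begin{equation}
\label{plan:diff}
|\beta^\pm_n(z) - \beta^\pm_n(z')| \le \varepsilon_n \, |z - z'|,
\end{equation}
which is immediate from the derivative bound $|\partial_z \beta^\pm_n| \le \varepsilon_n$ in (\ref{1.137}) (resp.\ (\ref{1.37})) by integrating along the segment. Combined with a lower bound on $|\beta^+_{n_k}(z^*_{n_k})|$ — which must hold because $t_{n_k} = |\beta^-_{n_k}|/|\beta^+_{n_k}| \to 0$ forces $\beta^+_{n_k}$ to dominate, and both cannot vanish since $n_k \in \mathcal{M}$ means $\lambda^+_{n_k} \neq \lambda^-_{n_k}$ so $\beta^+_{n_k}\beta^-_{n_k} \neq 0$ — the estimate (\ref{plan:diff}) shows that neither the numerator nor the denominator of $t_{n_k}(z)$ can swing far enough over the short segment $[z^-_{n_k}, z^+_{n_k}]$ to escape a bound of the form $\eta_k \to 0$. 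Concretely, I would write $t_{n_k}(z) \le \big(|\beta^-_{n_k}(z^*)| + \varepsilon_{n_k}|z-z^*|\big)\big/\big(|\beta^+_{n_k}(z^*)| - \varepsilon_{n_k}|z-z^*|\big)$ and bound $|z - z^*| \le \frac{1}{2}|z^+_{n_k} - z^-_{n_k}|$ on the segment.

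The hard part will be controlling the correction terms $\varepsilon_{n_k}|z - z^*|$ against the baseline size of $\beta^+_{n_k}(z^*)$: I need to verify that $\varepsilon_{n_k} \cdot |z^+_{n_k} - z^-_{n_k}|$ is genuinely negligible compared to $|\beta^+_{n_k}(z^*)|$. This is where the fundamental inequalities relating the spectral gap $\gamma_{n_k} = z^+_{n_k} - z^-_{n_k}$ to $\beta^\pm_{n_k}$ (from \cite{DM15,DM21} as cited in the introduction) become essential — they give $|z^+_{n_k} - z^-_{n_k}| \lesssim |\beta^+_{n_k}(z^*)| + |\beta^-_{n_k}(z^*)| \le 2|\beta^+_{n_k}(z^*)|$ in the regime $t_{n_k} \le 1$, so the correction term is $O(\varepsilon_{n_k}|\beta^+_{n_k}(z^*)|)$, which is a factor $\varepsilon_{n_k} \to 0$ smaller than the denominator and hence absorbable. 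With this in hand I would set $\eta_k = C\big(t_{n_k}(z^*_{n_k}) + \varepsilon_{n_k}\big)$ for a suitable absolute constant $C$, which tends to $0$ by (\ref{7.21}) and $\varepsilon_n \to 0$, yielding (\ref{7.22}) uniformly on the segment and completing the proof.
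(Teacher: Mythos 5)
Your proposal is correct and follows essentially the same route as the paper's own proof: the gap inequality $|\gamma_{n_k}|\le 2(|\beta^-_{n_k}(z^*_{n_k})|+|\beta^+_{n_k}(z^*_{n_k})|)\le 4|\beta^+_{n_k}(z^*_{n_k})|$ from \cite{DM15,DM21}, the derivative bounds (\ref{1.137})/(\ref{1.37}) giving a Lipschitz estimate over the short segment, and the resulting bound $\eta_k \sim C\bigl(t_{n_k}(z^*_{n_k})+\varepsilon_{n_k}\bigr)$, which is exactly the paper's $\eta_k=\frac{4\varepsilon_{n_k}+t_{n_k}(z^*_{n_k})}{1-4\varepsilon_{n_k}}$. (Only your side remark that $n_k\in\mathcal{M}$ forces $\beta^+_{n_k}\beta^-_{n_k}\neq 0$ is inessential and slightly off --- $\beta^-$ may vanish at $z^*_{n_k}$; what is needed, and what your main argument actually uses, is $\beta^+_{n_k}(z^*_{n_k})\neq 0$, which already follows from $t_{n_k}(z^*_{n_k})\to 0$ and the gap inequality.)
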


\begin{proof}
By \cite[Lemma 20]{DM21} (in the case of Hill operators with
$H^{-1}_{per}$-potentials) or by \cite[Lemma 40]{DM15} (in the case
of Dirac operators), for large enough $|n|$ we have
$$
|\gamma_n| \leq 2 (|\beta_n^- (z_n^*)|+|\beta_n^+ (z_n^*)| ).
$$
Therefore, (\ref{7.21}) implies that for large enough $k$
\begin{equation}
\label{7.23} |\gamma_{n_k}| \leq 2 (|\beta_{n_k}^- (z_{n_k}^*)| +
|\beta_{n_k}^+ (z_{n_k}^*)|) \leq 4 |\beta_{n_k}^+ (z_{n_k}^*)|.
\end{equation}

In view of (\ref{1.137}) in Proposition \ref{bprop1} or (\ref{1.37})
in Proposition \ref{bprop}, for each $z \in [z_n^-,z_n^+]$ and all
$n \in\mathcal{M}$ with large enough $|n|$ we have
\begin{equation}
\label{7.24} |\beta_n^\pm (z) - \beta_n^\pm (z_n^*)| \leq
\sup_{[z_n^-,z_n^+]} \left |\frac{\partial \beta_n^\pm }{\partial z}
(z) \right | \cdot |z-z_n^*| \leq \varepsilon_n |\gamma_n|,
\end{equation}
with $\varepsilon_n  \to 0 $  as $|n|\to \infty.$ Therefore, by
(\ref{7.23}) and (\ref{7.24}) it follows that
\begin{equation}
\label{7.26} |\beta_{n_k}^+ (z)| \geq |\beta_{n_k}^+ (z_{n_k}^*)| -
4 \varepsilon_{n_k} |\beta_{n_k}^+ (z_{n_k}^*) | = (1-4
\varepsilon_{n_k}) |\beta_{n_k}^+  (z_{n_k}^*)|.
\end{equation}

On the other hand, (\ref{7.23}) and (\ref{7.24}) imply that
$$
|\beta_{n_k}^- (z)| \leq |\beta_{n_k}^- (z) - \beta_{n_k}^-
(z_{n_k}^*)| + |\beta_{n_k}^- (z_{n_k}^*)| \leq 4\varepsilon_{n_k}
|\beta_{n_k}^+  (z_{n_k}^*)| + |\beta_{n_k}^- (z_{n_k}^*)|.
$$
Thus, since $\varepsilon_{n_k} \to 0,$ we obtain
$$
\frac{|\beta_{n_k}^-  (z)|}{|\beta_{n_k}^+ (z)|} \leq
\frac{4\varepsilon_{n_k} |\beta_{n_k}^+  (z_{n_k}^*)| +
|\beta_{n_k}^- (z_{n_k}^*)|}{(1-4\varepsilon_{n_k})|\beta_{n_k}^+
(z_{n_k}^*)|} = \frac{4\varepsilon_{n_k} +
t_{n_k}(z_{n_k}^*)}{1-4\varepsilon_{n_k}} \to 0,
$$
i. e., (\ref{7.22}) holds with $\eta_k =\frac{4\varepsilon_{n_k} +
t_{n_k}(z_{n_k}^*)}{1-4\varepsilon_{n_k}}.  $

\end{proof}

Now one may follow  p. 754 in \cite{DM15} (in Russian original p.
170)  in order to complete the proof.

\end{proof}
\bigskip

2. Theorem \ref{crit} provides a general criterion for Riesz basis
property of the system of root functions of Hill operator or Dirac
operator subject to periodic or anti-periodic boundary conditions.
It extends and slightly generalizes \cite[Theorem 1]{DM25} (or
\cite[Theorem 2]{DM25a}) in the case of Hill operators, and
\cite[Theorem 12]{DM26} in the case of Dirac operators.

Theorem \ref{crit} is an effective criterion for analyzing the
existence or non-existence of Riesz bases consisting of root
functions of Hill or Dirac operators. We refer to our papers
\cite{DM25a, DM25, DM26} for concrete applications (see also
\cite[Theorem 71]{DM15}).

Now we give examples of classes of Hill operators with singular
potentials which system of root functions has (or has not) the Riesz
basis property.

\begin{Example}
\label{E11} Let $\mathcal{A} \subset (0,\pi)$ be countable, and let
\begin{equation}
\label{7.44} v(x) = \sum_{k\in \mathbb{Z}}  \sum_{\alpha \in
\mathcal{A}} g(\alpha) \delta (x-\alpha - k \pi) -
\frac{1}{\pi}\sum_{\alpha \in \mathcal{A}} g(\alpha)
\end{equation}
with
\begin{equation}
\label{7.45} \exists \alpha^*: \quad  |g(\alpha^*)| > \sum_{\alpha
\in \mathcal{A}\setminus \{\alpha^*\}} |g(\alpha)|.
\end{equation}
Then the system of root functions of $L_{Per^\pm} (v) $ has the
Riesz basis property.
\end{Example}
(The function $v$ in (\ref{7.44}) lies in $H^{-1}_{per}$ as it
follows from \cite[Theorem 3.1 and Remark 2.3]{HM01} or
\cite[Proposition 1]{DM16}.)

\begin{proof}
Indeed, (\ref{7.44}) implies that the Fourier coefficients of $v$
\begin{equation}
\label{7.47} V(k) = \frac{1}{\pi} \sum_{\alpha \in \mathcal{A}}
g(\alpha) e^{ik\alpha},  \quad k \in 2\mathbb{Z},
\end{equation}
satisfy
\begin{equation}
\label{7.48}  \exists A>0: \quad \frac{1}{A} \leq |V(k)| \leq A,
\quad \forall k \in 2\mathbb{Z}.
\end{equation}
Recall that by (\ref{22.34}) $\beta_n^\pm (v,z) = V(\pm 2n) +
\sum_{k=1}^\infty  S^\pm_k, $ with $S_k $ defined by (\ref{22.35}).
In view of (\ref{22.35}) and (\ref{7.48}),
 $$ |S^\pm_k| \leq  \sum_{j_1, \ldots, j_k \neq \pm n}
\frac{A^{k+1}} {|n^2 -j_1^2 +z| \cdots |n^2 - j_k^2 +z|}.
$$
For  $|z|< n/2, $ we have
$$
|n^2 -j^2 +z| \geq |n^2 -j^2| -n/2  \geq \frac{1}{2}|n^2 -j^2| \quad
\text{for} \;\; j \neq \pm n, \;\; j-n \in 2\mathbb{Z}.
$$
Therefore,
 $$ |S^\pm_k| \leq  \sum_{j_1, \ldots, j_k \neq \pm n}
\frac{(2A)^{k+1}} {|n^2 -j_1^2| \cdots |n^2 - j_k^2|} \leq
(2A)^{k+1} \left (\sum_{j \neq \pm n} \frac{1}{|n^2 -j^2|} \right
)^k.
$$
Now, by the elementary inequality
$$
\sum_{j \neq \pm n} \frac{1}{|n^2 -j^2|} \leq \frac{2\log n}{n},
\quad n \geq 3,
$$
it follows that
$$
|S^\pm_k| \leq (4A)^{k+1}  \left (\frac{\log n}{n} \right )^k.
$$
Thus,  $\sum_{1}^\infty |S^\pm_k| = O ( (\log n)/n),$ so we obtain
\begin{equation}
\label{7.50} \beta_n^\pm (v,z)= V(\pm 2n) +O ((\log n)/n).
\end{equation}
In view of (\ref{7.48}) the latter formula implies (\ref{cr11}),
thus the system of root functions of $L_{Per^\pm} (v) $ has the
Riesz basis property.
\end{proof}
\bigskip

3. Next we use (\ref{7.50}) to explain the claims in Example
\ref{eG}. \vspace{2mm}\\ {\em Proof of Claims (i) and (ii) in
Example \ref{eG}}.

Proof of (i). In view of (\ref{42.13}), the Fourier coefficients
$V(m), \; m \in 2\mathbb{Z},$ of the potential $v$ in Example
\ref{eG} are given by
$$
V(m) = \begin{cases}  0  &   m \leq 0,\\
c(m/2)       &     m >0.
\end{cases}
$$
Since $V(m) =0 $ for $m \leq 0,$  one can easily see from Formulas
(\ref{22.34}) and (\ref{22.35}) that
$$
\beta_n^- (v;z) \equiv 0 \quad  \forall n >N_*, \; |z| \leq n.
$$
On the other hand, by (\ref{42.14}),
$$ \exists A>0: \quad 1/A \leq |V(m)| \leq A \quad \forall m \in 2\mathbb{N},
$$
so the same argument as above proves that (\ref{7.50}) holds. Since,
by (\ref{42.14}), we have  $|V(2n)|>1/A $,  it follows that
\begin{equation}
\label{7.52} \beta_n^+ (v;z) = V(\pm 2n) +O ((\log n)/n) \neq 0
\quad \text{if} \;\; n >N_{*}.
\end{equation}

Fix an $n>N_{*}.$ By Proposition \ref{bprop1}, the equation
(\ref{be1}), that is  $$ (z-\alpha_n (z;v))^2 = \beta^+_n (z;v)
\beta^-_n (z;v) $$ has exactly two (counted with multiplicity) roots
in the disc $|z| < n/4. $ Since $\beta_n^- (v;z) \equiv 0, $ now
this equation has one double root, say $z_n^*, $ and the matrix $$
\left [
\begin{array}{cc} \alpha_n (v,z_n^*)-z_n^*  & \beta^-_n (v;z_n^*)
\\ \beta^+_n (v;z_n^*) &  \alpha_n (v,z) \alpha_n (v,z_n^*)-z_n^*
\end{array}
\right ] =\left [
\begin{array}{cc} 0  & 0
\\ \beta^+_n (v;z_n^*) &  0
\end{array}
\right ]
$$
is Jordan. In view of Lemma \ref{lem1}(c), this implies that all
$E_m $ with $m>N_*$ are Jordan, i.e., (i) in Example \ref{eG} holds.
\vspace{2mm}

Proof of (ii). By the proof of (i)  we have, for large enough $n,$
\begin{equation}
\label{7.54} \gamma_n =0, \quad \beta^-_n (v;z_n^*)=0,  \quad
\frac{1}{2A} \leq |\beta^+_n (v;z_n^*)| \leq 2A.
\end{equation}
Therefore, by \cite[Theorem 37, (7.30)]{DM21} it follows for
$n>N_{**}$ that
\begin{equation}
\label{7.55}
  \frac{1}{144A} \leq  \frac{1}{72} |\beta^+_n (v;z_n^*)|
    \leq    |\mu_n - \lambda_n^+| \leq   58 |\beta^+_n (v;z_n^*)|
    \leq 116 A.
\end{equation}

We set
$$
f_n = u_{2n},  \quad   \xi_n = \|u_{2n-1} \|^{-1}, \quad
     \varphi_n = \xi_n \cdot u_{2n-1}.
$$
Then (\ref{42.10}) takes the form
$$
L \,f_n = \lambda_n^+ f_n, \quad L\, \varphi_n = \lambda_n^+
\varphi_n + \xi_n \cdot f_n,
$$
so now we are using the notations of \cite[Lemma 30]{DM21} (or
\cite[Lemma 59]{DM15}) and can apply the related Fundamental
Inequalities.

By the inequalities
$$
|\mu_n - \lambda_n^+| \leq 4 \xi_n  + 4 |\gamma_n|
$$
$$
\xi_n \leq 4 |\gamma_n| + 2(|\beta^-_n (v;z_n^*)|+|\beta^+_n
(v;z_n^*)|)
$$
(see \cite{DM15}, p. 741;  p. 156 in Russian original) it follows,
in view of (\ref{7.54}) and (\ref{7.55}), that
$$
\xi_n \sim |\mu_n - \lambda_n^+| \sim |\beta^+_n (v;z_n^*)|).
$$
Therefore,
$$
0 <  \inf  \{\xi_n \},      \qquad \sup \{\xi_n \}< \infty,
$$
so the system $\{ u_{2n}, \, u_{2n-1}, \; n>N_{*}\} $ is a Riesz
basis in its closed linear span. This completes the proof of Claim
(ii) in Example \ref{eG}.
\bigskip

\section{Fundamental inequalities and criteria for Riesz basis property}

  1. Now we have to analyze carefully  2D-blocks, $P_m, \, E_m = Ran P_m$ and
pairs of root-functions $\{u_{2m-1}, u_{2m} \}.$

  As a matter of fact it has been done -- just in the form which perfectly fits
to our needs coming from Criterion \ref{crit6} -- in our papers
\cite{KaMi01,DM5,DM15, DM21}. T. Kappeler and B. Mityagin
\cite[Theorem 4.5]{KaMi01}, in the case of Hill operator with
$L^2$-potential proved the inequality
\begin{equation}
\label{5.01} |\mu -\lambda^+| \leq 2 K_{10} (|\xi | +2 |\gamma|)
\end{equation}
(see notations in (\ref{4.1}) - (\ref{4.6}) below). P.Djakov and B.
Mityagin \cite[Lemma 10, Inc. (4.32)]{DM5} succeeded to go to the
opposite direction and proved the inequality
\begin{equation}
\label{5.02} |\xi |\leq  6 |\gamma| + 8|\mu -\lambda^+|
\end{equation}
(Notice that the constants may change because in \cite{KaMi01} and
\cite{DM5} the interval $I =[0,1],$  not $[0,\pi] $ as in the
present paper.)

All these results are presented in \cite{DM15} and the proofs are
written in the way which covers the case of 1D Dirac operator as
well -- see Section 4.2 and 4.3 there. Moreover, these proofs could
be extended to the case of Hill operators with $H^{-1}_{per}$
potentials as soon as we prove (\ref{2.4}) for the deviations $P_n
-P_n^0.$ This is done in \cite[Section 9.2, Proposition 44 and
Theorem 45]{DM21} even in a stronger form
\begin{equation}
\label{5.03} \|P_n -P_n^0\|_{L^1 \to L^\infty} \to 0  \quad
\text{as} \;\; n \to \infty
\end{equation}
-- see \cite[(9.7), (9.8), (9.84)]{DM21}.  Analogues of the
inequalities (\ref{5.01}) and (\ref{5.02}) are inside of the proof
of Lemma 30 there. \bigskip

2. We fix $m$ to consider $E = E_m = Ran P_m, \, dim E = 2,$ with
$m$ large enough. For a while we suppress an index $m$ and write
\begin{equation}
\label{4.1} f=u_{2m}, \quad h=u_{2m-1}, \quad \gamma = \lambda_m^+
-\lambda_m^- \neq 0
\end{equation}
with
\begin{equation}
\label{4.2} L_{bc} f = \lambda^+ f, \quad  L_{bc} h = \lambda^- h,
\quad \|f\|=\|h\|=1
\end{equation}
and such a normalization that
\begin{equation}
\label{4.3} h= af + b \varphi, \quad \langle \varphi, f\rangle =0,
\quad a\geq 0, \;b >0, \;\; a^2 +b^2 =1.
\end{equation}
Notice that
\begin{equation}
\label{4.4} \langle u_{2m}, u_{2m-1}   \rangle =\langle  f, h
\rangle= a, \quad \kappa := (1-a^2)^{-1/2} = 1/b.
\end{equation}
Moreover,
\begin{equation}
\label{4.5} L_{bc} \varphi = (\lambda^+ -\gamma)\varphi + \xi f,
\quad \xi= -\frac{a}{b} \gamma.
\end{equation}
For $\mu = \mu_m$  put
\begin{equation}
\label{4.6} L_{Dir} g = \mu g, \quad  \|g\|=1.
\end{equation}
Then -- see \cite[formula (4.32)]{DM15} and the lines which follow
-- for some $\tau, \; 1/2 \leq |\tau|,$  by \cite[(4.28)]{DM15}
\begin{equation}
\label{4.7} \tau (\mu - \lambda^+)g = b (\xi P_{Dir}f - \gamma
P_{Dir}\varphi).
\end{equation}
Put
\begin{equation}
\label{4.8} r = \frac{|\mu - \lambda^+|}{|\lambda^+ -\lambda^-|},
\quad \text{i.e.,} \quad |\gamma|= \frac{1}{r} |\mu - \lambda^+|;
\end{equation}
then
\begin{equation}
\label{4.9} \mu - \lambda^+ = \frac{1}{\tau} b \left (  \xi \langle
P_{Dir}f, g  \rangle - \gamma \langle P_{Dir}\varphi, g
\rangle\right )
\end{equation}
and with $\| P_{Dir}\| \leq 3/2$ by (\ref{2.4}) we have
\begin{equation}
\label{4.10} |\mu - \lambda^+| \leq 2\left ( \frac{3}{2}|\xi| +
\frac{3}{2} \cdot \frac{1}{r} |\mu - \lambda^+|  \right ) .
\end{equation}
If $r \geq 6$ it follows that
\begin{equation}
\label{4.11} |\mu - \lambda^+| \leq 6 |\xi | = 6 a\,|\gamma|/b \leq
\frac{6}{b} \cdot |\gamma|,
\end{equation}
and
\begin{equation}
\label{4.12} r\leq 6\kappa, \quad  \kappa \in (\ref{4.4}).
\end{equation}
If $r \leq 6$ of course (\ref{4.12}) holds because $\kappa \geq 1$.

These relations (\ref{4.11})--(\ref{4.12}) hold for any $m \in M, $
\begin{equation}
\label{4.13}  \mathcal{M} =\{n: \; \gamma_n = \lambda_n^+
-\lambda_n^- \neq 0, \quad n\geq N_*\}.
\end{equation}
For $\Delta \subset \mathcal{M}$ set
\begin{equation}
\label{4.14} U_\Delta =\{u_{2m-1}, u_{2m} : \; m \in \Delta\}
\end{equation}
and
\begin{equation}
\label{4.15} H_\Delta = \text{the closure of} \; Lin\,Span
\;U_\Delta.
\end{equation}
\begin{Proposition}
\label{prop4.5}
 If the system $U_\Delta$ is a basis in $H_\Delta$ then
\begin{equation}
\label{4.16} \kappa (\Delta) = \sup \{(1-|\langle u_{2m-1}, u_{2m}
\rangle|^2)^{-1/2}: \; m \in \Delta\} < \infty
\end{equation}
is finite, and
\begin{equation}
\label{4.17} R_\Delta = \sup_{m \in \Delta} \frac{|\mu -
\lambda^+|}{|\lambda^+ -\lambda^-|}  \leq 6 \kappa(\Delta) <\infty.
\end{equation}
\end{Proposition}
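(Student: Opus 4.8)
The plan is to combine the geometric Criterion~\ref{crit6} with the pointwise inequalities (\ref{4.11})--(\ref{4.12}) already obtained in this section; these are the only two ingredients carrying real content, and both are in place.

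First I would settle the finiteness of $\kappa(\Delta)$. By the facts recalled in Section~4.1 (see (\ref{2.1})--(\ref{2.4}) and the Bari--Markus argument) the full family $\{P_n:\; |n|>N_*,\; n\in\Gamma_{bc}\}$ is a Riesz system of projections, and this property passes to every subfamily, in particular to $\{P_m:\; m\in\Delta\}$, since the defining condition (\ref{3.8}) is imposed on \emph{arbitrary} finite subsets. Hence the hypotheses of Criterion~\ref{crit6} hold for the two-dimensional blocks $Q_m=P_m$, $m\in\Delta$, and for $Y=H_\Delta$. Since $U_\Delta$ is assumed to be a basis in $H_\Delta$, the necessity part of Criterion~\ref{crit6} gives precisely (\ref{4.16}), i.e. $\kappa(\Delta)<\infty$. (One can also see this directly: Lemma~\ref{lem3.1} yields $\sup_j\|q_j\|<\infty$, and since $\|u_j\|=1$ the left inequality in (\ref{3.19a}) reads $\kappa_m\le\|\psi_j\|=\|q_j\|$, so the supremum of the local quantities $\kappa_m$ is finite.)

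Next I would obtain the quantitative bound by localization. For a fixed $m\in\Delta\subset\mathcal{M}$, write $\kappa_m=(1-|\langle u_{2m-1},u_{2m}\rangle|^2)^{-1/2}=1/b$ as in (\ref{4.4}). The inequalities (\ref{4.11})--(\ref{4.12}), established for \emph{every} index of $\mathcal{M}$, give $r_m\le 6\kappa_m$, where $r_m=|\mu_m-\lambda_m^+|/|\lambda_m^+-\lambda_m^-|$. Since $\kappa_m\le\kappa(\Delta)$ by definition of the supremum, it follows that $r_m\le 6\kappa(\Delta)$ for each $m$, and taking the supremum over $m\in\Delta$ produces $R_\Delta\le 6\kappa(\Delta)$. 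Together with the first step this is exactly (\ref{4.17}).

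There is no genuinely hard step: all the analysis (the resolvent estimates behind the Riesz system property and the passage (\ref{4.7})--(\ref{4.12}) from the basic equation to the ratio $r$) has already been carried out. The only point deserving care is that Criterion~\ref{crit6} is phrased with an $\mathbb{N}$-indexing, whereas $\Delta$ is an arbitrary, possibly sparse, subset of $\Gamma_{bc}$; as noted in Section~4.2 the ordering is immaterial for Riesz systems, and in any case the conclusion $\kappa(\Delta)<\infty$ is a supremum of order-independent local quantities, so $\Delta$ may be enumerated in any convenient way before the criterion is invoked.
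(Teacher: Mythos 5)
Your proof is correct and takes essentially the same route as the paper's: the necessity half of Criterion~\ref{crit6} (equivalently, Lemma~\ref{lem3.1} together with (\ref{3.19a})) gives (\ref{4.16}), and the individual inequalities (\ref{4.11})--(\ref{4.12}), i.e.\ $r_m \le 6\kappa_m$, yield (\ref{4.17}) upon taking the supremum over $m\in\Delta$. The extra care you take in checking that the Riesz-system property passes to the subfamily $\{P_m:\, m\in\Delta\}$ and that the $\mathbb{N}$-indexation is harmless is precisely what the paper compresses into the phrase ``with proper adjustments of indexation.''
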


\begin{proof}
 With proper adjustments of indexation (see the remark in the first
 paragraph of Section 4.2)
Criterion \ref{crit6}, Formula (\ref{3.21}), imply that if
$U_\Delta$ is a basis then (\ref{4.16}) holds. By
(\ref{4.11})--(\ref{4.12}) for each individual $m \in \Delta$
\begin{equation}
\label{4.18}r_m = \frac{|\mu_m - \lambda_m^+|}{|\lambda_m^+
-\lambda_m^-|} \leq 6\kappa_m.
\end{equation}
Taking {\em supremum} over $m \in \Delta$ we get (\ref{4.17}).
\end{proof}
\bigskip

3. Now we want to complement the inequality
(\ref{4.11})--(\ref{4.12}) with estimates of $\kappa = 1/b$ from
above in terms of $r \in (\ref{4.8})$ ($m$ is suppressed).
  It immediately follows from the inequality
\begin{equation}
\label{4.19} |\xi|  \leq 8 |\gamma| + 36 |\mu - \lambda^+|
\end{equation}
-- see lines after formula (4.59) on p. 745 in \cite{DM15} (p. 161
in Russian original). Indeed with $\gamma \neq 0$  (\ref{4.19})
together with (\ref{4.5}) and (\ref{4.8}) imply
\begin{equation}
\label{4.20} |\xi| \leq  \frac{1}{b}|\,\gamma|  \leq (8+36r)|\gamma|
\end{equation}
so
$$b \geq \frac{\sqrt{3}}{2}, \quad  \frac{1}{b} \leq
\frac{2}{\sqrt{3}}< 2,  \quad \text{or} \; \;  b \leq
\frac{\sqrt{3}}{2},  $$ and
\begin{equation}
\label{4.21} \frac{1}{2b} \leq \frac{\sqrt{1-b^2}}{b} \leq 4(2+9r);
\end{equation}
Therefore, in either case
\begin{equation}
\label{4.22} \kappa = \frac{1}{b}  \leq 16 + 72 r.
\end{equation}
With these inequalities Criterion \ref{crit6}, its second part,
implies with notations (\ref{4.16}), (\ref{4.17}) the following.

\begin{Proposition}
\label{prop2.7} If $R_\Delta < \infty $ then
\begin{equation}
\label{4.23} \kappa (\Delta) \leq 16 + 72 R_\Delta
\end{equation}
and the system $U_\Delta$ is a Riesz basis in $H_\Delta.$
\end{Proposition}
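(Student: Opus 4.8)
The plan is to reduce the proposition to a pointwise estimate, valid for each individual index $m\in\Delta$, and then to invoke the second (sufficiency) half of Criterion \ref{crit6}. The inequality (\ref{4.23}) is already essentially contained in the computation (\ref{4.19})--(\ref{4.22}) carried out just above the statement; the remaining content is the passage from the pointwise bound to the uniform one and the recognition that $\kappa(\Delta)<\infty$ is exactly the hypothesis needed to apply Criterion \ref{crit6}.

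First I would fix $m\in\Delta$ and recall the local data (\ref{4.1})--(\ref{4.8}), so that $\xi=-(a/b)\gamma$ with $a^2+b^2=1$, $\kappa=1/b$, and $r=|\mu-\lambda^+|/|\gamma|$. Inserting the fundamental inequality (\ref{4.19}) into the identity $|\xi|=(a/b)|\gamma|$ and cancelling $|\gamma|\neq 0$ (legitimate since $m\in\mathcal{M}$) gives $a/b\le 8+36r$. The key step is then the dichotomy on the size of $b$ recorded in (\ref{4.21}): if $b\ge \sqrt{3}/2$ then $\kappa=1/b<2$ and the bound $\kappa\le 16+72r$ holds trivially, while if $b\le \sqrt{3}/2$ then $a=\sqrt{1-b^2}\ge 1/2$, so $1/(2b)\le a/b\le 8+36r$ and hence $\kappa=1/b\le 16+72r$. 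In either case we obtain (\ref{4.22}), that is $\kappa_m\le 16+72\,r_m$.

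Taking the supremum over $m\in\Delta$ in $\kappa_m\le 16+72\,r_m$ yields (\ref{4.23}); since $R_\Delta<\infty$ by hypothesis, this shows $\kappa(\Delta)<\infty$, which is precisely condition (\ref{3.21}) of Criterion \ref{crit6} for the pair system $\{u_{2m-1},u_{2m}:m\in\Delta\}$. After the routine relabelling of the $\Delta$-indexed pairs as an $\mathbb{N}$-indexed system (as in the remark in the first paragraph of Section 4.2), the block projections $\{P_m:m\in\Delta\}$ form a Riesz system, being a subsystem of the Riesz system of projections established in Section 4.1 via (\ref{2.2})--(\ref{2.4}). Thus both hypotheses of Criterion \ref{crit6} are met, and its sufficiency part lets me conclude that $U_\Delta$ is a normalized unconditional, i.e.\ Riesz, basis in $H_\Delta$.

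The only genuinely hard input is the fundamental inequality (\ref{4.19}); everything downstream is elementary arithmetic and an application of the already-proven Criterion \ref{crit6}. I therefore expect the main obstacle to lie not in this proposition but upstream: in establishing (\ref{4.19}) itself (imported here from \cite[Section 4.3]{DM15} and, in the $H^{-1}_{per}$ case, from \cite[Lemma 30]{DM21}) and in verifying that the $2$D block projections form a Riesz system in all four $Per^\pm$ cases. Granting those ingredients, the proof above is short and self-contained.
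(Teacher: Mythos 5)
Your proof is correct and follows essentially the same route as the paper's: the pointwise bound $\kappa_m \leq 16 + 72\,r_m$ obtained from the fundamental inequality (\ref{4.19}) via the dichotomy on the size of $b$ (exactly the computation (\ref{4.20})--(\ref{4.22})), then the supremum over $m \in \Delta$, then the sufficiency half of Criterion \ref{crit6}. Your additional explicit verification that $\{P_m : m \in \Delta\}$ remains a Riesz system (as a subfamily of the Riesz system from Section 4) and the relabelling remark are points the paper leaves implicit, but they do not change the argument.
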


\begin{proof}
 Again, individual inequalities
\begin{equation}
\label{4.24} \kappa_m \leq 16 + 72 r_m, \quad m\in \Delta
\end{equation}
hold by (\ref{4.22}). With $R_\Delta$  being finite if we take {\em
supremum} over $m \in \Delta$ in (\ref{4.22}) we get (\ref{4.23}).
Then Criterion \ref{crit6} claims that $U_\Delta$ is a Riesz basis
in $H_\Delta$.
\end{proof}

4. {\em Fundamental inequalities} (\ref{4.11}) and (\ref{4.19}) for
individual $m$ and Propositions \ref{prop4.5} and \ref{prop2.7}
where a subset $\Delta $ could be chosen as we wish emphasize that
neither Dirichlet eigenvalues $\mu_m, \,m \not \in \Delta, $  nor
$Per^+$ or $Per^-$ eigenvalues $\lambda^\pm $ for  $m \not \in
\Delta $ could have any effect on $R_\Delta $  or $\kappa (\Delta).$
In particular, Dirichlet eigenvalues with even (or odd) indices have
no effect {\bf whatsoever} when convergence of spectral
decompositions related to $Per^-$ (or $Per^+$ correspondingly) is
considered.

We can combine Propositions \ref{prop4.5} and \ref{prop2.7} and
claim (for all four cases listed in Section 4.2 in the line prior to
(\ref{42.1})) the following.

\begin{Theorem}
\label{thmM} Let $L_{Per^\pm}(v) $  be either the Hill operator with
$L^2 $ or $H^{-1}_{per}$-potential $v$ or the Dirac operator with
$L^2$-potential $v,$  subject to periodic $Per^+$ or anti-periodic
$Per^-$ boundary conditions. Then the following conditions are
equivalent:\bigskip

(1) The system of root functions of $L_{Per^\pm}(v) $ contains a
Riesz basis in $L^2 ([0,\pi])$ (respectively in $L^2 ([0,\pi])^2.$
\bigskip

(2) The system $\{u_j\}$ defined in (\ref{500}) is a Riesz basis in
$L^2 ([0,\pi])$ (respectively in $(L^2 ([0,\pi]))^2.$
\bigskip

(3) The system $\{u_j\}$ is a basis in $L^2 ([0,\pi])$ (respectively
in $(L^2 ([0,\pi]))^2.$
\bigskip

 (4)  $\kappa
(\mathcal{M}) := \sup \,\{(1-|\langle u_{2m-1}, u_{2m} \rangle|^2
)^{-1/2}: \; m \in \mathcal{M}\} <\infty. $
\bigskip

 (5) $  R (\mathcal{M}) := \sup \left \{\frac{|\mu_m -
\lambda_m^+|}{|\lambda_m^+ -\lambda_m^-|}: \;  m \in \mathcal{M}
\right \} < \infty. $
\bigskip

(6) The system $\{u_j\}$ is a basis in a separable r.i.f.s. $E$ such
that for some $1 < a \leq b < \infty$
$$
L^{a} \supset E \supset L^b, \quad \| g\|_{L^{a}} \leq \| g\|_{E}
\leq \| g\|_{L^{b}} \quad \forall g \in L^\infty.
$$

(7) With $\beta^\pm_n (v,z)$ defined in (\ref{p1}), and $t_n (z)=
|\beta^-_n (v,z)/\beta^+_n (v,z)|$
\begin{equation}
\label{cr1} 0< \liminf_{n\in \mathcal{M}} t_n (z_n^*), \quad
\limsup_{n\in \mathcal{M}} t_n (z_n^*)  < \infty,
\end{equation}
where $z_n^* = \frac{1}{2}(\lambda_n^+ +\lambda_n^-) - n^2$ in the
Hill case and $z_n^* = \frac{1}{2}(\lambda_n^+ +\lambda_n^-) - n$ in
the case of Dirac operators.
\end{Theorem}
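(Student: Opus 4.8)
The plan is to prove the seven statements equivalent through a single web of implications, exploiting that essentially all of the analytic work is already in place. The operators supply a \emph{Riesz system of projections} $\{S_{N_*},P_n\}$ (the fact recorded in Section 4.1, equations (\ref{2.2})--(\ref{2.4})), and on top of that the geometric Criterion \ref{crit6}, the fundamental inequalities of Section 7, and the equiconvergence Theorem \ref{thm5.8} turn abstract basis statements into concrete spectral ones. The unifying reduction is Remark \ref{BR}: since the canonical choice (\ref{500}) takes orthonormal pairs on every block in $\mathcal{M}_1\cup\mathcal{M}_2$, the full system $\{u_j\}$ is a Riesz basis if and only if the subsystem $U_{\mathcal{M}}$ of genuine eigenfunction pairs is a Riesz basis in its closed linear span. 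So I would phrase each statement in terms of $U_{\mathcal{M}}$ and establish the inner chain $(2)\Leftrightarrow(3)\Leftrightarrow(4)\Leftrightarrow(5)$ first, then attach $(1)$, $(6)$ and $(7)$.

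First, $(2)\Rightarrow(3)$ is trivial, and $(3)\Rightarrow(2)$ is exactly Corollary \ref{cor7}: because $\{P_n\}$ is a Riesz system, any Schauder basis built from these blocks is automatically unconditional, so ``basis'' and ``Riesz basis'' coincide and there is no separate Schauder question. Next, $(3)\Leftrightarrow(4)$ is Criterion \ref{crit6} applied to $U_{\mathcal{M}}$: if $U_{\mathcal{M}}$ is a basis then Proposition \ref{prop4.5} yields $\kappa(\mathcal{M})<\infty$, while conversely $\kappa(\mathcal{M})<\infty$ forces the Riesz basis property by the second half of Criterion \ref{crit6}. The crux, $(4)\Leftrightarrow(5)$, I would obtain from the per-block inequalities (\ref{4.18}), namely $r_m\le 6\kappa_m$, and (\ref{4.24}), namely $\kappa_m\le 16+72\,r_m$, which follow from the fundamental inequalities (\ref{4.11}) and (\ref{4.19}); taking the supremum over $m\in\mathcal{M}$ gives $R(\mathcal{M})\le 6\kappa(\mathcal{M})$ and $\kappa(\mathcal{M})\le 16+72\,R(\mathcal{M})$ (this is precisely Propositions \ref{prop4.5} and \ref{prop2.7}), so the two suprema are finite together. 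This settles $(2)\Leftrightarrow(3)\Leftrightarrow(4)\Leftrightarrow(5)$.

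It then remains to attach the three outer conditions. For $(1)\Leftrightarrow(2)$ I would argue that for $m\in\mathcal{M}$ the eigenvalues $\lambda_m^\pm$ are distinct, so the normalized eigenfunctions $u_{2m-1},u_{2m}$ are determined up to unimodular factors; hence any Riesz basis drawn from the root system must contain such pairs, forcing $U_{\mathcal{M}}$ to be a Riesz basis in its span, and Remark \ref{BR} then upgrades this to the whole canonical system, giving $(1)\Rightarrow(2)$; the reverse is immediate. For $(3)\Leftrightarrow(6)$ I would invoke Theorem \ref{thm5.8}: under the squeezing $L^a\supset E\supset L^b$ with $1<a\le b<\infty$, being a basis in $E$ is equivalent to being a basis in $L^2$, so $(6)$ is a restatement of $(3)$. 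Finally, $(7)$ is handled by Theorem \ref{crit}: condition (\ref{cr1}) on the ratios $t_n(z_n^*)=|\beta^-_n/\beta^+_n|$ is there shown equivalent to $U_{\mathcal{M}}$ being a Riesz basis in its span, hence to $(2)$.

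The main obstacle is the step $(4)\Leftrightarrow(5)$, the passage from the purely geometric quantity $\kappa_m=(1-|\langle u_{2m-1},u_{2m}\rangle|^2)^{-1/2}$ to the spectral ratio $r_m=|\mu_m-\lambda_m^+|/|\lambda_m^+-\lambda_m^-|$. This rests on the two-sided fundamental inequalities (\ref{4.11}) and (\ref{4.19}), whose derivation requires the quantitative control $\|P_n-P_n^0\|\to 0$ together with $\|P_{Dir}\|\le 3/2$ and the projection identity (\ref{4.7}); the real content of Section 7 is that these inequalities, proven in \cite{KaMi01,DM5,DM15,DM21}, hold uniformly across all three classes (Hill with $L^2$ or $H^{-1}_{per}$ potential, and Dirac with $L^2$ potential). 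A secondary delicate point is $(1)\Rightarrow(2)$, where one must exclude the possibility that a cleverer, non-canonical selection of associated functions on the blocks of $\mathcal{M}_2$ produces a Riesz basis when the canonical Option~1 system fails; this is exactly what Remark \ref{BR} and the normalization analysis of Section 4.3 rule out, so no genuine gap remains and the proof reduces to assembling the cited results in the order above.
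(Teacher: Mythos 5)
Your proposal is correct and follows essentially the same route as the paper: the equivalence of (1)--(5) is assembled from Remark \ref{BR}, Corollary \ref{cor7}, Criterion \ref{crit6}, and the fundamental-inequality Propositions \ref{prop4.5} and \ref{prop2.7}, while (6) and (7) are attached via Theorem \ref{thm5.8} and Theorem \ref{crit}, exactly as in the paper's (much terser) proof. Your added detail on $(1)\Rightarrow(2)$ -- that on blocks indexed by $\mathcal{M}$ the root functions are only scalar multiples of the eigenfunction pair, so any Riesz basis of root functions forces $U_{\mathcal{M}}$ to be a Riesz basis in its span -- is a correct unpacking of what the paper delegates to Remark \ref{BR}.
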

(Recall that $\beta_n^\pm (v;z) $ are introduced in Section 2.5,
Lemma \ref{lem1}; see their basic properties in Propositions
\ref{bprop1} and \ref{bprop}).

\begin{proof}
The equivalence of Conditions (1) -- (5) follows from Propositions
\ref{prop4.5} and \ref{prop2.7} and Corollary \ref{cor7}. Conditions
(6) and (7), and their equivalence to (1) -- (5) are explained in
Sections 5, Theorem \ref{thm5.8} and Section 6, Theorem \ref{crit}.
\end{proof}
\bigskip

\end{document}